\title{Optimal topological generators of $U(1)$}
\author{Zachary Stier}
\email{zstier@princeton.edu}
\address{1113 Frist Campus Center, Princeton, NJ 08544.}
\date{February 2020}
\definecolor{codegreen}{rgb}{0,0.6,0}
\definecolor{codegray}{rgb}{0.5,0.5,0.5}
\definecolor{codepurple}{rgb}{0.58,0,0.82}
\definecolor{codeblue}{rgb}{0,0,0.95}
\definecolor{backcolor}{rgb}{0.95,0.95,0.92}
\begin{document}

\maketitle

\begin{abstract}
Sarnak's golden mean conjecture states that $(m+1)d_\vf(m)\le1+\frac{2}{\sqrt{5}}$ for all integers $m\ge1$, where $\vf$ is the golden mean and $d_\gt$ is the discrepancy function for $m+1$ multiples of $\gt$ modulo 1. In this paper, we characterize the set $\cS$ of values $\gt$ that share this property, as well as the set $\cT$ of those with the property for some lower bound $m\ge M$. Remarkably, $\cS\mod1$ has only 16 elements, whereas $\cT$ is the set of $GL_2(\Z)$-transformations of $\vf$. 
\end{abstract}

\section{Introduction}
	The unitary group $U(1)$ is compact with an invariant measure, and which may be modeled as acting by rotation on the circle $S^1\subset\C$ taken to have length 1. It is well known in this model that $U(1)$ is (monogenically) topologically generated by a rotation by any irrational angle.\footnote{For a nonabelian consideration, see e.g. Parzanchevski--Sarnak \cite{PS}.} A natural question here is which of these topological generators is the {\em best}. To answer this inquiry, we introduce the following function: 
	\begin{definition-non}[cf. \cite{GV}]
		Let $[[m]]=\{0,\dots,m\}$.\footnote{This is in contrast to $[m]$, which denotes $\{1,\dots,m\}$.} Define $d_\gt(m)$ as 
		$$\sup\{\abs{I} : \text{$I\subset\R$ an interval},(I+\Z)\cap[[m]]\gt=\emptyset\}.$$
	\end{definition-non}
	$d_\gt(m)$ measures the largest ``gap,'' modulo 1, of $m+1$ consecutive integer multiples of the real number $\gt$. It is clear that if $\gt$ is rational with the reduced fraction representation $\gt=\frac{a}{b}$, then $d_\gt(m)=\frac{1}{b}$ for all $m\ge b-1$. Meanwhile, when $\gt$ is irrational it is a topological generator of $U(1)$, so
	$$\lim\limits_{m\to\infty}d_\gt(m)=0$$
	weakly monotonically. For all choices of $\gt$, $(m+1)d_\gt(m)\ge1$ since equality is attained precisely when $d_\gt(m)=\frac{1}{m+1}$, but by the pigeonhole principle, $d_\gt(m)\ge\frac{1}{m+1}$. Therefore, $d_\gt(m)$ can be thought of as the {\em discrepancy} between the first $m+1$ iterates of $\gt$ and an equidistribution, and $(m+1)d_\gt(m)$ can be thought of as measuring how quickly $d_\gt(m)$ tends to 0 for irrational $\gt$. 
	
	Graham and van Lint \cite{GV} studied asymptotic behavior of this quantity, using the language of continued fractions. We say that two continued fractions $\gt$ and $\gs$ are {\em equivalent}, written $\gt\asymp\gs$, if there are positive integers $m$ and $n$ such that $\gt$ and $\gs$ agree after removing the length-$m$ and length-$n$ prefixes, respectively. The golden ratio is $\vf=\frac{1+\sqrt{5}}{2}$, and has continued fraction consisting of all 1's. 
	\begin{theorem-non}[\cite{GV}, Theorem 2]
		For any irrational $\gt$, 
		$$\limsup\limits_{m\to\infty}(m+1)d_\gt(m)\ge1+\frac{2}{\sqrt{5}}$$
		with equality iff $\gt\asymp\vf$. 
	\end{theorem-non}
	Here, we prove a stronger result about these asymptotics: 
	\begin{theorem}\label{under nec}
		Given $\gt\in\R$, there exists $M\in\N$ for which $m\ge M$ implies $(m+1)d_\gt(m)<1+\frac{2}{\sqrt{5}}$ if and only if $\gt\asymp\vf$. 
	\end{theorem}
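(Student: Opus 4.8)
The plan is to combine the theorem of Graham and van Lint with the three-distance (Steinhaus) theorem and an exact description of the continued fraction data of $\gt$ in the case $\gt\asymp\vf$. The ``only if'' direction is immediate by contraposition: if $\gt\not\asymp\vf$ then either $\gt$ is rational, in which case $d_\gt(m)$ is eventually a positive constant and $(m+1)d_\gt(m)\to\infty$, or $\gt$ is irrational and inequivalent to $\vf$, in which case the equality clause of \cite{GV} gives $\limsup_{m\to\infty}(m+1)d_\gt(m)>1+\tfrac{2}{\sqrt5}$. Either way $(m+1)d_\gt(m)>1+\tfrac{2}{\sqrt5}$ for infinitely many $m$, so no admissible $M$ exists.

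For the ``if'' direction, let the continued fraction of $\gt$ be $[a_0;a_1,a_2,\dots]$ with $a_k=1$ for all $k>n$, let $p_k/q_k$ denote its convergents, and set $\eta_k=\|q_k\gt\|=|q_k\gt-p_k|$. For $k\ge n$ the sequences $(q_k)$ and $(\eta_k)$ obey $q_{k+1}=q_k+q_{k-1}$ and $\eta_{k+1}=\eta_{k-1}-\eta_k$; since $q_k\to\infty$ while $\eta_k\to0$, solving these recurrences produces real constants depending only on $\gt$, with $A,D>0$, such that
$$q_k=A\vf^k+B\psi^k\quad\text{and}\quad\eta_k=D\vf^{-k}\qquad(k\ge n-1),\qquad\text{where }\psi=1-\vf=-\vf^{-1}.$$
Substituting these into the classical identity $q_k\eta_{k-1}+q_{k-1}\eta_k=1$ and using $\vf+\vf^{-1}=\sqrt5$ together with $\psi^{k}\vf^{-(k-1)}+\psi^{k-1}\vf^{-k}=0$ forces $AD\sqrt5=1$.

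Next I pin down $d_\gt(m)$ exactly. By the three-distance theorem, for $k$ large enough that $a_k=a_{k+1}=1$, the $q_k$ points $\{0,\gt,\dots,(q_k-1)\gt\}$ bound gaps of exactly two lengths, $\eta_{k-2}$ (with multiplicity $q_{k-1}$) and $\eta_{k-1}$; since $\eta_{k-2}=\eta_{k-1}+\eta_k$ while $\eta_{k-2}/\eta_{k-1}=\vf$, and since at most three distinct gap lengths can coexist, an induction shows that each point added as $m+1$ increases from $q_k$ to $q_{k+1}$ subdivides a longest (i.e.\ $\eta_{k-2}$) gap into an $\eta_{k-1}$ gap and an $\eta_k$ gap. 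Hence $d_\gt(m)=\eta_{k-2}$ whenever $q_k\le m+1\le q_{k+1}-1$, so over this block of values of $m$ the quantity $(m+1)d_\gt(m)$ is maximized at $m+1=q_{k+1}-1$, where
$$(q_{k+1}-1)\eta_{k-2}=\Big(1+\tfrac{2}{\sqrt5}\Big)+(-1)^{k+1}BD\,\vf^{-(2k-1)}-D\,\vf^{-(k-2)},$$
the closed form using $AD\sqrt5=1$ and $\vf^3=2+\sqrt5$, hence $AD\vf^3=\vf^3/\sqrt5=1+\tfrac{2}{\sqrt5}$. Since $D>0$ and $2k-1>k-2$, the negative term $-D\vf^{-(k-2)}$ eventually dominates the oscillating term, so there is $k_*$ with $(q_{k+1}-1)\eta_{k-2}<1+\tfrac{2}{\sqrt5}$ for every $k\ge k_*$; taking $M=q_{k_*}-1$, so that $m\ge M$ puts $m+1$ in one of these blocks with index $\ge k_*$, completes the proof.

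The step I expect to demand the most care is the three-distance bookkeeping: checking, for all sufficiently large $k$ rather than merely in the limit, that at $m+1=q_k$ the gap lengths are precisely $\eta_{k-2}$ and $\eta_{k-1}$ with the stated multiplicities and that every newly inserted point splits a longest gap. Granting that, the remainder is exact manipulation of the Fibonacci-type recurrences.
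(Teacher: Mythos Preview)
Your proof is correct, and it reaches the result by a genuinely different route from the paper's.

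For the ``if'' direction, the paper quotes Mozzochi's formula
\[
\max_{k_n+k_{n+1}-1\le m\le k_n+2k_{n+1}-2}(m+1)d_\gt(m)=\frac{1+2x_n-\tfrac{1}{k_n}}{\vf+x_n-1},
\]
rearranges to the criterion $x_n<\vf+\frac{5+2\sqrt5}{k_n}$, and then controls $|x_n-\vf|$ by sandwiching $x_n=[1,\dots,1,a_N,\dots,a_1]$ between two consecutive convergents of $\vf$ via a monotonicity lemma. You instead solve the tail recurrences $q_{k+1}=q_k+q_{k-1}$ and $\eta_{k+1}=\eta_{k-1}-\eta_k$ in closed form, pin down the constants through the exact identity $q_k\eta_{k-1}+q_{k-1}\eta_k=1$ (forcing $AD\sqrt5=1$), and read off the block maximum $(q_{k+1}-1)\eta_{k-2}=\rho+(-1)^{k+1}BD\,\vf^{-(2k-1)}-D\,\vf^{-(k-2)}$ directly. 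This is more self-contained (no appeal to \cite{Moz}) and makes the rate at which the block maxima approach $\rho$ from below completely explicit; the paper's argument is terser but leans on an external formula. Your worry about the three-distance bookkeeping is unnecessary: the statement ``$d_\gt(m)=\eta_{k-2}$ for $q_k\le m+1\le q_{k+1}-1$'' is exactly the $\ga=0$, $a_{n+2}=1$ case of the Slater--S\'os lemma (the paper's \eqref{gv lem 1}/\corref{cor:useful}), so you can simply cite that rather than redo the induction.
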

	Letting $\cT$ be the set of values $\gt$ for which the condition on $d_\gt(m)$ in \thmref{under nec} holds, we will see, as is well known, that $\cT$ is the set of linear fractional transformations by $GL_2(\Z)$ of $\vf$, a dense countable subset of $\R$. 
	
	For many choices of $\gt$, $(m+1)d_\gt(m)$ rises above $1+\frac{2}{\sqrt{5}}$ before settling below, i.e. $M=1$ as in \thmref{under nec} does not suffice for us here. To study this new sought-after phenomenon---a global generalization of $\limsup\limits_{m\to\infty}(m+1)d_\gt(m)=1+\frac{2}{\sqrt{5}}$---we introduce a new measure of quality for topological generators. 
	\begin{definition-non}
		$D(\gt)=\sup\limits_{m\in\N}(m+1)d_m(\gt)$.
	\end{definition-non}
	From \cite{GV}, $D(\gt)\ge1+\frac{2}{\sqrt{5}}$ with equality on some (possibly empty) subset $\cS\subset\cT$. Sarnak conjectured, and Mozzochi recently proved, the following (the ``golden mean conjecture''): 
	\begin{theorem-non}[\cite{Moz}]
		$D(\vf)=1+\frac{2}{\sqrt{5}}$. 
	\end{theorem-non}
	This can be expanded to a surprising result completely characterizing $\cS$. 
	\begin{theorem}\label{under suff}
		There exist exactly 16 values $\gt$, modulo 1, for which $D(\gt)=1+\frac{2}{\sqrt{5}}$, which are specified in \figref{unders}.\footnote{$d_\gt(m)=d_{1-\gt}(m)$, so $\gt\in\cS$ if and only if $1-\gt\in\cS$, which is why only 8 values are specified in the table.}
	\end{theorem}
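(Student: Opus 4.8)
The plan is to turn $D(\gt)$ into an explicit supremum of quantities read off from the continued fraction of $\gt$, restrict to noble $\gt$ via $\cS\subseteq\cT$, and reduce the classification of $\cS$ to a finite check. Since $\cS\subseteq\cT$ — by \thmref{under nec}, or directly because $\gt\not\asymp\vf$ forces $\limsup_m(m+1)d_\gt(m)>1+\tfrac2{\sqrt5}$ and hence $D(\gt)>1+\tfrac2{\sqrt5}$ — and since $d_\gt=d_{1-\gt}$, it suffices to consider $\gt=[0;a_1,a_2,\dots]\in(0,\tfrac12)$ with $a_j=1$ for all $j>n$, where $n\ge1$ is least with this property (so $a_n\ge2$), and to decide for which tuples $(a_1,\dots,a_n)$ one has $D(\gt)=1+\tfrac2{\sqrt5}$ rather than $D(\gt)>1+\tfrac2{\sqrt5}$.

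For the formula I would invoke the three-distance theorem in the form underlying Graham--van Lint's analysis. Write $q_k$ for the convergent denominators of $\gt$, $\delta_k=|q_k\gt-p_k|$ (so $q_k\delta_{k-1}+q_{k-1}\delta_k=1$ and $\delta_{k-1}=\beta_k\delta_k$, where $\beta_k=[a_{k+1};a_{k+2},\dots]$), and $\gamma_k=q_{k-1}/q_k$. Every $m\ge1$ lies in a unique window $m+1=jq_k+q_{k-1}+r$, $1\le j\le a_{k+1}$, $0\le r<q_k$, throughout which the largest gap of $\{0,\gt,\dots,m\gt\}$ modulo $1$ is $\delta_{k-1}-(j-1)\delta_k$; maximizing over $r$ yields
$$D(\gt)=\sup_{k\ge0}\ \max_{1\le j\le a_{k+1}}\bigl((j+1)q_k+q_{k-1}-1\bigr)\bigl(\delta_{k-1}-(j-1)\delta_k\bigr)$$
(with $q_{-1}=0$, $\delta_{-1}=1$). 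I would split the supremum into the \emph{tail} indices $k\ge n$ and the finitely many \emph{head} indices $k<n$. For $k\ge n$ one has $a_{k+1}=1$ (so only $j=1$ occurs), $\delta_{k-1}=\vf\delta_k$, and $\gamma_{k+1}=1/(1+\gamma_k)$, and an elementary manipulation rewrites the $k$-th term $(2q_k+q_{k-1}-1)\delta_{k-1}$ as $F(\gamma_{k+1})-\delta_{k-1}$, where $F(g)=\vf^2\tfrac{1+g}{\vf+g}$ is increasing and concave with $F(1/\vf)=1+\tfrac2{\sqrt5}$. In particular the tail terms converge to $1+\tfrac2{\sqrt5}$, re-proving $D(\gt)\ge1+\tfrac2{\sqrt5}$.

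Hence the tail terms are all $\le1+\tfrac2{\sqrt5}$ exactly when $F(\gamma_{k+1})-F(1/\vf)\le\delta_{k-1}$ for all $k\ge n$. Concavity bounds the left side by $\tfrac\vf5|\gamma_{k+1}-1/\vf|$, and since $|\gamma_{k+1}-1/\vf|$ contracts per step by the factor $1/(1+\gamma_{k+1})<1$ whereas $\delta_{k-1}$ contracts only by $1/\vf$, the ratio $|\gamma_{k+1}-1/\vf|/\delta_{k-1}$ is decreasing in $k$; so this whole family reduces to the single inequality at $k=n$, and for $\gt=\vf$ the reduction together with its base case is Mozzochi's theorem. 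On the head side, the finitely many terms (together with the $k=n$ term) are explicit rational functions of $a_1,\dots,a_n$, and a large partial quotient pushes one of them above $1+\tfrac2{\sqrt5}$: already $2(1-\gt)\to2$ as $a_1\to\infty$; the $k=n$ term $(2q_n+q_{n-1}-1)\delta_{n-1}\to2$ as $a_n\to\infty$; and for $n\ge2$ the $k=n-1$, $j=1$ term exceeds $2-\delta_{n-2}$ (since $\beta_{n-1}=a_n+1/\vf\ge2$), forcing $\delta_{n-2}$ bounded below and hence $q_{n-1}$, and therefore $n$, bounded. Only finitely many prefixes survive.

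It remains to run through that finite list: discard every prefix with a head term exceeding $1+\tfrac2{\sqrt5}$, and for each survivor confirm $D(\gt)=1+\tfrac2{\sqrt5}$ — the head terms are then $\le1+\tfrac2{\sqrt5}$ by construction, and the tail is controlled by the contraction estimate above (equivalently, one checks directly the finitely many tail terms that could exceed, since they converge to $1+\tfrac2{\sqrt5}$). This leaves exactly the $8$ prefixes recorded in \figref{unders}, and adjoining their $1-\gt$ partners gives the $16$ values modulo $1$. I expect the crux to be this final verification: Mozzochi's theorem settles $\vf$ itself, but one must confirm that the sharp tail estimate survives the change of ``initial data'' $(q_n,q_{n-1})$ that a nontrivial prefix imposes — equivalently, that each surviving prefix meets the $k=n$ inequality — and one must make the enumeration of candidate prefixes genuinely exhaustive rather than merely indicative.
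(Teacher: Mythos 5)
Your overall strategy coincides with the paper's: restrict to $\gt\asymp\vf$, force the continued fraction to become all $1$'s after a bounded prefix, bound the partial quotients in that prefix, and then settle the (now finite) list of candidates by explicit verification. The paper implements exactly this via two lemmas --- Lemma~\ref{when ones} shows that $a_n>1$ for any $n\ge6$ already violates $D(\gt)\le\rho$ (so the tail of $1$'s must start by index $6$), and Lemma~\ref{abcde bounds} gives the explicit ceilings $a_1\le18$, $a_2\le18$, $a_3\le14$, $a_4\le12$, $a_5\le11$ --- and then runs a short Python search, with the cutoff \texttt{n}$\,\le29$ justified by the Fibonacci inequality $(\star\star)$ from the proof of Theorem~\ref{under nec}.

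Where you diverge technically is in the tail control: your identity $(2q_k+q_{k-1}-1)\delta_{k-1}=F(\gamma_{k+1})-\delta_{k-1}$ with $F(g)=\vf^2\tfrac{1+g}{\vf+g}$, together with the linearization $F(\gamma)-F(1/\vf)\le\tfrac{\vf}{5}\lvert\gamma-1/\vf\rvert$ and the observation that $\lvert\gamma_{k+1}-1/\vf\rvert/\delta_{k-1}$ contracts by $1/(1+\gamma_{k+1})$ per step, is a cleaner way to see that only finitely many tail indices can be problematic than the paper's brute $F_{d+1}>(k_5+k_4)/(5+2\sqrt{5})$ estimate. (One small caveat: concavity reduces the infinite family to the linearized inequality at $k=n$, which is \emph{stronger} than the genuine $k=n$ inequality; so for any prefix where the linearized version fails but the honest one holds, you would still have to step forward a few indices --- this is exactly the role the paper's explicit cutoff $d=24$ plays.)

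The real gap in your writeup is the head-index bound. Your argument that ``a large partial quotient pushes one of the head terms above $\rho$, forcing $\delta_{n-2}$, hence $q_{n-1}$, hence $n$ bounded'' is directionally right but not yet quantitative, and the theorem's assertion of \emph{exactly} $16$ values lives or dies on making this enumeration airtight. The paper does this by a direct computation in Lemma~\ref{abcde bounds} that yields $a_n<\frac{\rho-1}{2-\rho}+1+\frac{1}{(2-\rho)F_n}$ for $n\in[5]$, and separately rules out any nontrivial entry at position $\ge6$ (Lemma~\ref{when ones}). Until you replace your sketch with bounds of this explicitness, your proof reduces the problem to a finite check but does not actually produce the finite list to be checked, which --- as you yourself flag --- is the crux.
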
	
	Unsurprisingly, $\vf$ (and $\vf^2=\vf+1$) is in one of these 16 modulo-1 classes: note that $\vf+\eta_7=2$. 
	
	One way to measure the ``quality'' of a generator on $1\le m\le M$ is by the largest value of $(m+1)d_\gt(m)$ attained on that range. To put this formally, we introduce: 
	\begin{definition-non}
		$D_M(\gt)=\max\limits_{m\in[M]}(m+1)d_\gt(m)$. 
	\end{definition-non}
	Then, there is no single ``best'' generator, in the sense of minimizing this quantity: 
	\begin{theorem}\label{no best}
		For each $\gt_0\in\cS$, there are infinitely many values $M\in\N$ for which $\gt_0=\argmin\limits_{\gt\in\cS}D_M(\gt)$. 
	\end{theorem}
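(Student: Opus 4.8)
Throughout write $L=1+\frac2{\sqrt5}=\frac{\vf^3}{\sqrt5}$ and $\psi=-1/\vf$, and let $\{y\}$ denote the fractional part of $y$. The plan is to determine the exact ``staircase'' shape of $M\mapsto D_M(\gt)$ for each $\gt\in\cS$, to observe that all sixteen residues in $\cS\bmod1$ realize a single self-similar profile separated only by a phase, and then to harvest infinitely many good windows from the density of $\{\log_\vf M\}_{M\ge1}$ in $[0,1)$.

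\emph{Shape of $D_M(\gt)$.} Fix $\gt\in\cS\subseteq\cT$. Then $\gt$ has a continued fraction that is eventually all $1$'s, so its convergents $p_k/q_k$ satisfy $q_{k+1}=q_k+q_{k-1}$ for all $k$ beyond the prefix, whence $q_k=A_\gt\vf^k+B_\gt\psi^k$ there, with $A_\gt>0$. In this ``golden regime'' the three-distance theorem gives that the longest gap among the first $n$ multiples of $\gt$ equals $\|q_{k-2}\gt\|$ for every $n\in[q_k,q_{k+1})$, and $\|q_{k-2}\gt\|=\frac1{\vf q_{k-2}+q_{k-3}}=\frac1{A_\gt\sqrt5\,\vf^{k-2}}$ (the identity uses $\vf\psi=-1$ and $\vf^2+1=\sqrt5\,\vf$). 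Hence $(m+1)d_\gt(m)$ is an increasing linear function of $m+1$ on each block $m+1\in[q_k,q_{k+1})$ with maximum $(q_{k+1}-1)\|q_{k-2}\gt\|$ attained only at $m+1=q_{k+1}-1$; these block-maxima are eventually strictly increasing and each exceeds the preceding block's values only at that last point, so for all large $M$, with $k=k(\gt,M)$ the index satisfying $q_{k+1}\le M+2<q_{k+2}$,
$$L-D_M(\gt)=\frac{1-B_\gt\psi^{k+1}}{A_\gt\sqrt5\,\vf^{k-2}}=\frac LM\,\vf^{\{\log_\vf(M/A_\gt)\}}\bigl(1+O(1/M)\bigr),$$
uniformly over the finite set $\cS$ provided $M$ is not within $O(1)$ of a jump of $D_\bullet(\gt)$ (the right‑hand identity comes from $k=\lfloor\log_\vf(M/A_\gt)\rfloor-1+o(1)$ and $\frac{\vf^2}{A_\gt\sqrt5}=\frac L\vf A_\gt^{-1}$); in particular $D_M(\gt)<L$ for every $M$ (the finitely many pre-regime values of $m$ give $(m+1)d_\gt(m)<L$ by a direct check, since $D(\gt)=L$ is not attained). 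I expect this step — the three-distance bookkeeping, and the extraction of the self-similar normal form $\frac LM\vf^{\{\log_\vf(M/A_\gt)\}}(1+o(1))$ — to be the crux.

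\emph{Phase comparison and conclusion.} Because $d_\gt=d_{1-\gt}$, the sixteen residues fall into eight reflection pairs with a common value of $A_\gt$; set $\alpha_\gt=\{\log_\vf A_\gt\}\in[0,1)$. By the display above, for $M$ large and with $\xi:=\{\log_\vf M\}$ bounded away from every $\alpha_\gt$, one has $D_M(\gt_0)<D_M(\gt)\iff\{\xi-\alpha_{\gt_0}\}>\{\xi-\alpha_\gt\}$. One verifies from \figref{unders} that the eight numbers $\alpha_\gt$ are pairwise distinct; then, given $\gt_0$, let $\alpha'$ be the cyclic predecessor of $\alpha_{\gt_0}$ among the eight, and let $\xi^*$ be the midpoint of the arc $(\alpha',\alpha_{\gt_0})$. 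For $\xi$ in a fixed small neighbourhood $J$ of $\xi^*$ one has $\{\xi-\alpha_{\gt_0}\}>\{\xi-\alpha_\gt\}$ by a fixed positive margin for every residue with $\alpha_\gt\ne\alpha_{\gt_0}$, and $\xi$ stays bounded away from all $\alpha_\gt$, so the $O(1/M)$ errors are negligible against the margin. Since $\log_\vf(M+1)-\log_\vf M\to0$, the set $\{\{\log_\vf M\}:M\in\N\}$ is dense in $[0,1)$, so $\{\log_\vf M\}\in J$ for infinitely many $M$; for all sufficiently large such $M$ we get $D_M(\gt_0)<D_M(\gt)$ for every $\gt\in\cS$ not congruent to $\gt_0$ or $1-\gt_0$ modulo $1$, while $D_M(\gt_0)=D_M(1-\gt_0)$. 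Thus $\gt_0$ realizes $\min_{\gt\in\cS}D_M(\gt)$ — and is its unique argmin modulo $1$ and reflection — for infinitely many $M$, which is the assertion. The distinctness of the $\alpha_\gt$ is precisely what upgrades ``$\gt_0$ is a minimizer'' to ``$\gt_0$ is the minimizer''.
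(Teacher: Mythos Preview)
Your argument is correct and takes a genuinely different route from the paper's. The paper deduces the theorem from the stronger \thmref{help 3}: it works directly with the explicit convergents $K_n(j)$ of the eight $\eta_i$, proves the cyclic interleaving $K_n(1)<\cdots<K_n(8)<K_{n+1}(1)$ (\lemref{cycle}), then in \lemref{sigma and tau formulas} computes closed-form endpoints $\gs_n(i),\tau_n(i)$ for the intervals on which $\eta_i=w(M)$, and finally evaluates the limiting ratios $(\tau_n-\gs_n)/(\gs_{n+1}-\gs_n)$ and $(\tau_n-\gs_n)/(\tau_n-\tau_{n-1})$ to get the exact $\liminf$ and $\limsup$ densities. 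Your approach instead packages the same structure into a single asymptotic normal form $L-D_M(\gt)\sim\frac{L}{M}\vf^{\{\log_\vf M-\ga_\gt\}}$, reduces the comparison to the cyclic order of the eight phases $\ga_\gt=\{\log_\vf A_\gt\}$, and finishes with the soft observation that $\{\log_\vf M\}$ is dense in $[0,1)$. Your distinctness-of-phases step is precisely the content of \lemref{cycle} (the interleaving $K_n(1)<\cdots<K_n(8)<K_{n+1}(1)$ is equivalent to the eight $A_\gt$'s lying in a single $\vf$-octave with distinct values, hence distinct $\ga_\gt$'s). What you gain is a cleaner, more conceptual proof of \thmref{no best} itself; what you lose is the quantitative content of \thmref{help 3}---your density argument gives infinitely many $M$ but not the explicit $\liminf$/$\limsup$ frequencies, which the paper's interval computation yields for free. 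Two small points worth tightening: your claim that the block-$k$ linear ``exceeds the preceding block's values only at that last point'' is off by at most one (the crossover is at $m+1\approx(q_k-1)\vf\approx q_{k+1}-\vf$), though this is absorbed in your $O(1/M)$; and the verification that the eight $\ga_\gt$ are distinct should be made explicit, either by computing the $A_\gt$ from the $(k_5,k_6)$ data or by citing the interleaving directly.
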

	
	\begin{figure}
		\centering
		\begin{tabular}{|r|cccccc|c|c|c|}
				& 0 & 1 & 2 & 3 & 4 & 5 & matrix & exact & num.~val.\\\hline
			$\eta_{7}$ & 0 & 2 & 1 & 1 & 1 & $\dot1$ & $\begin{pmatrix}1 \\ 2 & 1\end{pmatrix}$ & $\frac{3-\sqrt{5}}{2}$ & 0.381\dots \\\hline
			$\eta_{6}$ & 0 & 2 & 1 & 2 & 1 & $\dot1$ & $\begin{pmatrix}3 & 1 \\ 8 & 3\end{pmatrix}$ & $\frac{25-\sqrt{5}}{62}$ & 0.367\dots \\\hline
			$\eta_{8}$ & 0 & 2 & 2 & 1 & 1 & $\dot1$ & $\begin{pmatrix}2 & 1 \\ 5 & 2\end{pmatrix}$ & $\frac{7+\sqrt{5}}{22}$ & 0.419\dots \\\hline
			$\eta_{4}$ & 0 & 3 & 1 & 1 & 1 & $\dot1$ & $\begin{pmatrix}1 \\ 3 & 1\end{pmatrix}$ & $\frac{5-\sqrt{5}}{10}$ & 0.276\dots \\\hline
			$\eta_{5}$ & 0 & 3 & 2 & 1 & 1 & $\dot1$ & $\begin{pmatrix}2 & 1 \\ 7 & 3\end{pmatrix}$ & $\frac{9+\sqrt{5}}{38}$ & 0.295\dots \\\hline
			$\eta_{2}$ & 0 & 4 & 1 & 1 & 1 & $\dot1$ & $\begin{pmatrix}1 \\ 4 & 1\end{pmatrix}$ & $\frac{7-\sqrt{5}}{22}$ & 0.216\dots \\\hline
			$\eta_{3}$ & 0 & 4 & 2 & 1 & 1 & $\dot1$ & $\begin{pmatrix}2 & 1 \\ 9 & 4\end{pmatrix}$ & $\frac{11+\sqrt{5}}{58}$ & 0.228\dots \\\hline
			$\eta_{1}$ & 0 & 5 & 2 & 1 & 1 & $\dot1$ & $\begin{pmatrix}2 & 1 \\ 11 & 5\end{pmatrix}$ & $\frac{13+\sqrt{5}}{82}$ & 0.185\dots \\\hline
		\end{tabular}
		\caption{The values in $(\cS\mod1)\cap\lbr{0,\half}$ in lexicographic order of continued fraction. Indices reflect the canonical order with respect to embedding the $\cS\mod1\hookrightarrow[0,1]$ in the obvious way.}
		\label{unders}
	\end{figure}

\section{Definitions and past results}
	Henceforth let $\gt$ be irrational. $d_\gt(m)$ may be evaluated exactly, using the language of continued fractions. We recall the following from \cite{GV,HW}: 
	\begin{definition-non}
		Consider the infinite continued fraction $\gt=[a_0,a_1,\dots]$.\footnote{It is elementary that $\gt$ must have a continued fraction and that it cannot be finite.} We have the following notation, for nonnegative integers $n$: 
		\begin{itemize}
			\item $\frac{h_n}{k_n}=\frac{a_nh_{n-1}+h_{n-2}}{a_nk_{n-1}+k_{n-2}}=[a_0,a_1,\dots,a_n]$ is the $n$th convergent. 
			\item $x_n=[a_{n+1},\dots,a_1]$. 
			\item $\gt_n=[a_n,a_{n+1},\dots]$. 
			\item $[a_0,\dots,a_{n-1},\dot1]=[a_0,\dots,a_{n-1},1,1,1,\dots]$. 
		\end{itemize}
	\end{definition-non}
	\begin{remark}\label{rmk}
		Let $\gt=[a_0,\dots,a_N,\dot{1}]$, where for $n>N$ we have $a_n=1$. Then, for such $n=N+d$, $k_n=F_{d+1}k_N+F_dk_{N-1}$. By the recurrence $k_n=a_nk_{n-1}+k_{n-2}$ and the stipulation that $a_n\in\N$, $k_n\ge F_{n+1}$.
	\end{remark}
	Indeed, the $n$th convergent $g_n=[1,\dots,1]$ to $\vf=[\dot1]$ equals $\frac{F_{n+2}}{F_{n+1}}$, for $F_n$ the $n$th Fibonacci number, indexed from $F_0=0$ and $F_1=1$, and so in this way $\vf$ has the smallest convergents. 

	Using our new notation, we can write more concisely that if $\gt\asymp\gs$ then there exist positive integers $m$ and $n$ for which $\gt_m=\gs_n$. The relationship between equivalent continued fractions can be made even more explicit: 
	
	\begin{theorem-non}[cf. \cite{HW}, Theorems 174 and 176]
		Equivalence of continued fractions is an equivalence relation, and two continued fractions $\gt$ and $\gs$ are equivalent if and only if there exists $\fM=\begin{pmatrix}a&b\\c&d\end{pmatrix}\in GL_2(\Z)$ for which $\gt=\frac{a\gs+b}{c\gs+d}$, denoted by $\fM\gs$ in this context. 
	\end{theorem-non}
	In this terminology, the aforementioned theorem of \cite{GV} and \thmref{under nec} can be thought of as a biconditional with $\gt\asymp\vf$, and $\cT$ can be seen as $GL_2(\Z)\vf$. 
	
	The following are long-established results about continued fractions: 
	\begin{lemma}[cf. \cite{HW}, pp.140]
		Fixing again $\frac{h_n}{k_n}$ and $\gt_n$ with respect to $\gt$: 
		\begin{align}
			\gt-\frac{h_n}{k_n}&=\frac{(-1)^n}{k_n(k_{n-1}+k_n\gt_{n+1})}.\label{diff w convergent}\tag{$*$}
		\end{align}
	\end{lemma}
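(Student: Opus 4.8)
The plan is to derive the identity $(*)$ from two classical facts about the convergents of $\gt=[a_0,a_1,\dots]$: the \emph{tail identity}, which expresses $\gt$ itself as a M\"obius transformation of the complete quotient $\gt_{n+1}$, and the \emph{determinant identity} $h_{n-1}k_n-h_nk_{n-1}=(-1)^n$ relating consecutive convergents. Once both are in hand, $(*)$ follows from a one-line computation, so the work is entirely in assembling these two standard lemmas (as in \cite{HW}, pp.~140).

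For the determinant identity, I would package the defining recurrence $\frac{h_m}{k_m}=\frac{a_mh_{m-1}+h_{m-2}}{a_mk_{m-1}+k_{m-2}}$ — together with the usual conventions $h_{-1}=1$, $h_{-2}=0$, $k_{-1}=0$, $k_{-2}=1$ — into the matrix form
\[
\begin{pmatrix} a_0 & 1 \\ 1 & 0\end{pmatrix}\begin{pmatrix} a_1 & 1 \\ 1 & 0\end{pmatrix}\cdots\begin{pmatrix} a_m & 1 \\ 1 & 0\end{pmatrix}=\begin{pmatrix} h_m & h_{m-1} \\ k_m & k_{m-1}\end{pmatrix},
\]
proved by a trivial induction on $m$, and then take determinants: each factor has determinant $-1$, so $h_mk_{m-1}-h_{m-1}k_m=(-1)^{m+1}$, i.e.\ $h_{m-1}k_m-h_mk_{m-1}=(-1)^m$. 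For the tail identity, replacing the tail of $\gt$'s continued fraction by its complete quotient gives $\gt=[a_0,\dots,a_n,\gt_{n+1}]$, since $[a_n,a_{n+1},\dots]=a_n+1/\gt_{n+1}=[a_n,\gt_{n+1}]$; running the recurrence one step further with the real number $\gt_{n+1}$ in the role of $a_{n+1}$ — equivalently, multiplying the $m=n$ matrix identity on the right by $\begin{pmatrix}\gt_{n+1} & 1 \\ 1 & 0\end{pmatrix}$ — yields $\gt=\frac{\gt_{n+1}h_n+h_{n-1}}{\gt_{n+1}k_n+k_{n-1}}$. Subtracting $\frac{h_n}{k_n}$ and clearing denominators, the $\gt_{n+1}h_nk_n$ terms in the numerator cancel, leaving
\[
\gt-\frac{h_n}{k_n}=\frac{\gt_{n+1}h_n+h_{n-1}}{\gt_{n+1}k_n+k_{n-1}}-\frac{h_n}{k_n}=\frac{h_{n-1}k_n-h_nk_{n-1}}{k_n(k_{n-1}+k_n\gt_{n+1})},
\]
and plugging in the determinant identity gives exactly $(*)$.

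The only point requiring care — and the step I would be most deliberate about, even though it is routine — is the justification that the convergent recurrence legitimately applies with the non-integer entry $\gt_{n+1}$, i.e.\ that $[a_0,\dots,a_n,\gt_{n+1}]$ really does equal $\gt$; this is the standard induction on the number of entries (or, again, associativity of the matrix product), and along the way one should record that the denominator does not vanish, which is clear since $\gt_{n+1}>1$ and $k_{n-1}\ge0$, $k_n\ge1$, so $k_{n-1}+k_n\gt_{n+1}>0$. This simultaneously confirms that both sides of $(*)$ are well-defined.
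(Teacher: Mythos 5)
Your proof is correct, and the argument is exactly the classical one behind \cite{HW}, p.~140 (which the paper simply cites without reproducing a proof): express $\gt$ as the M\"obius image of the complete quotient $\gt_{n+1}$ via the matrix form of the convergent recurrence, subtract $\frac{h_n}{k_n}$, and let the determinant identity $h_{n-1}k_n-h_nk_{n-1}=(-1)^n$ collapse the numerator. The bookkeeping is right: each of the $m+1$ factors $\begin{pmatrix} a_i & 1 \\ 1 & 0\end{pmatrix}$ has determinant $-1$, giving $h_mk_{m-1}-h_{m-1}k_m=(-1)^{m+1}$ and hence the sign $(-1)^n$ in the numerator; the justification that the recurrence applies with the real entry $\gt_{n+1}$ in place of an integer partial quotient is exactly the point worth flagging, and your matrix-product phrasing (associativity, then multiply on the right by $\begin{pmatrix}\gt_{n+1} & 1 \\ 1 & 0\end{pmatrix}$) handles it cleanly. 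The nonvanishing of $k_{n-1}+k_n\gt_{n+1}$ follows as you say from $\gt_{n+1}>1$ (for $n\ge0$) and $k_n\ge1$, $k_{-1}=0$. Nothing is missing.
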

	With these notions in hand, the following is proved by Slater \cite{Sla} and S\'os \cite{Sos} and used extensively in \cite{GV}: 
	\begin{lemma}
		Given $\gt=[a_0,a_1,\dots]$ and nonnegative integers $\ga$ and $m$ satisfying $\ga<a_{n+2}$ and $k_n+(\ga+1)k_{n+1}-1\le m\le k_n+(\ga+2)k_{n+1}-2$, it is the case that
		\begin{equation}
			d_\gt(m)=\abs{(k_n\gt-h_n)-\ga(k_{n+1}\gt-h_{n+1})}.\label{gv lem 1}\tag{$**$}
		\end{equation}
	\end{lemma}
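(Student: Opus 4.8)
Write $N=m+1$ and let $\Pi_N=\{\{j\gt\}:0\le j\le m\}$, viewed as a set of $N$ points on the circle $\R/\Z$; these cut the circle into $N$ arcs, and $d_\gt(m)$ is the length of the longest of them. The plan is to establish the full ``three-distance'' structure of $\Pi_N$ and then read off the longest arc. Set $\beta_j:=\abs{k_j\gt-h_j}$; identity \eqref{diff w convergent} then supplies the three facts that drive everything: the $\beta_j$ strictly decrease, the signs of $k_j\gt-h_j$ alternate with $j$, and $\beta_{j-1}=a_{j+1}\beta_j+\beta_{j+1}$. Writing $N=k_n+(\ga+1)k_{n+1}+t$ with $0\le\ga<a_{n+2}$ and $0\le t<k_{n+1}$ --- which is precisely the hypothesis that $m$ lies in the stated range --- I would prove by induction on $N$ the following package: the arcs of $\Pi_N$ take at most three distinct lengths, the longest being the sum of the other two whenever all three occur; the longest length equals the right-hand side of \eqref{gv lem 1}; and the three lengths occur with the multiplicities dictated by the recurrence for the $k_j$, so that exactly $k_{n+1}-t$ of the arcs attain the maximum (and for $t=0$ only two lengths appear).

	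The inductive step is the heart of the matter. Passing from $\Pi_N$ to $\Pi_{N+1}$ inserts the single point $\{N\gt\}$, which falls in the interior of a unique arc $A$ of $\Pi_N$ and splits it in two; one must show that $A$ is a \emph{longest} arc of $\Pi_N$ and that its two pieces are the next two lengths down. The device is the rotation $R\colon x\mapsto x+\gt$, an isometry of $\R/\Z$ carrying $\Pi_N\setminus\{\{(N-1)\gt\}\}$ bijectively onto $\Pi_N\setminus\{0\}$. Hence, if $B$ is an arc of $\Pi_N$ not incident to $0$, then $R^{-1}(B)$ meets $\Pi_N$ only at its two endpoints and possibly at the single point $\{(N-1)\gt\}$; taking $B=A$ and noting that $\{N\gt\}=R(\{(N-1)\gt\})$ lies in the interior of $A$ forces $R^{-1}(A)$ to be the union of $\{(N-1)\gt\}$ with the two arcs of $\Pi_N$ flanking it, so that $\abs{A}$ is the sum of the lengths of those two flanking arcs. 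But those flanking arcs are exactly the two pieces into which the point $\{(N-1)\gt\}$ split an arc of $\Pi_{N-1}$ one step earlier, so by the inductive hypothesis they carry the two shortest lengths occurring in $\Pi_N$; combined with the three-distance relation, this makes $A$ a longest arc of $\Pi_N$ and pins down how it splits. The arcs incident to $0$ are treated by the same device with $R$ in place of $R^{-1}$, and the readjustment of multiplicities each time $t$ wraps from $k_{n+1}-1$ back to $0$ --- which either increments $\ga$ or, once $\ga$ has reached $a_{n+2}-1$, increments $n$ (here invoking $k_{n+2}=a_{n+2}k_{n+1}+k_n$ and $\beta_n=a_{n+2}\beta_{n+1}+\beta_{n+2}$) --- is verified case by case.

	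Granting the package, $d_\gt(m)$ is just the maximal arc length recorded in it, namely the right-hand side of \eqref{gv lem 1}. The obstacle I anticipate is not any single estimate but the bookkeeping: carrying the three lengths together with their exact multiplicities through the $t$-wraparounds and through the arcs abutting $0$. This combinatorial core is precisely what Slater \cite{Sla} and S\'os \cite{Sos} establish for the sequence $n\gt\bmod 1$, so a self-contained proof would in essence reproduce their analysis (or the equivalent discussion in \cite{GV}); separately, the base case and the very smallest indices --- where $n$ may need to be taken as $-1$, under the standard conventions $k_{-1}=0$, $h_{-1}=1$ --- should be checked directly.
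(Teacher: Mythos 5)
The paper does not prove this lemma at all: it states it as a known result of Slater and S\'os, used (with its long casework) in Graham--van Lint, and cites it. So there is no ``paper's own proof'' to compare against. What you have written is a reconstruction of precisely the argument those references give---the three-distance (Steinhaus) theorem, established by induction on $N$ via the rotation isometry $R\colon x\mapsto x+\gt$, which forces each newly inserted point $\{N\gt\}$ to land in a longest arc and split it into the two smaller lengths. Your use of $R$ to relate the arc containing $\{N\gt\}$ to the two arcs flanking $\{(N-1)\gt\}$ is the right mechanism, and the normalization $N=k_n+(\ga+1)k_{n+1}+t$ with $0\le t<k_{n+1}$ is exactly the reparametrization of the hypothesis.

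That said, two things keep this from being a proof rather than a plan. First, as you acknowledge, the entire substance of the Slater/S\'os result is the multiplicity bookkeeping across the wraparounds in $t$, in $\ga$, and in $n$; deferring all of it means the identification of the longest length with a specific $\beta$-combination is asserted, not derived. Second, when you do carry out the base cases, check the sign in \eqref{gv lem 1} carefully: since $k_n\gt-h_n$ and $k_{n+1}\gt-h_{n+1}$ have opposite signs, the expression $(k_n\gt-h_n)-\ga(k_{n+1}\gt-h_{n+1})$ has magnitude $\beta_n+\ga\beta_{n+1}$, whereas \corref{cor:useful}---which the paper says follows from \eqref{gv lem 1} together with \eqref{diff w convergent}---evaluates to $\frac{\gt_{n+2}-\ga}{k_n+k_{n+1}\gt_{n+2}}$, i.e. $\beta_n-\ga\beta_{n+1}$. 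A direct check at small $m$ (for instance $m=1$, $n=-1$, $\ga=1$ with $\gt\in(0,\tfrac12)$ gives $d_\gt(1)=1-\gt$) confirms the corollary's form, so your induction should land on $\bigl|\,|k_n\gt-h_n|-\ga|k_{n+1}\gt-h_{n+1}|\,\bigr|$ rather than the literal right-hand side of \eqref{gv lem 1}; be alert that the published formula as written appears to carry a sign slip relative to the corollary actually used downstream.
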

	Combining \eqref{diff w convergent} into \eqref{gv lem 1}, with some algebraic manipulation we have:
	\begin{cor}\label{cor:useful}
		Given $\gt=[a_0,a_1,\dots]$ and nonnegative integers $\ga$ and $m$ satisfying $\ga<a_{n+2}$ and $k_n+(\ga+1)k_{n+1}-1\le m\le k_n+(\ga+2)k_{n+1}-2$, it is the case that
		$$d_\gt(m)=\frac{\gt_{n+2}-\ga}{k_n+k_{n+1}\gt_{n+2}}.$$
	\end{cor}
	Henceforth, let $\rho=1+\frac{2}{\sqrt{5}}$. 

\section{Proofs of \thmref{under nec} and \thmref{under suff}}\label{main}
	
	\thmref{under nec} asserts that $\cT=GL_2(\Z)\vf$, via linear fractional transformation; that is, $\cT$ is the set of continued fractions $\gt\asymp\vf$. Towards the proof of this result, we first prove a useful lemma. Of course, this lemma can be generalized considerably, but this is not needed to prove the result in mind. 

	\begin{lemma}\label{monotone}
		Let $f(x)=[1,\dots,1,x]$ be a function on $\R^+$, where the continued fraction is length $n+2$. Then $f$ is monotonic (either increasing or decreasing). 
	\end{lemma}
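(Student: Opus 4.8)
The plan is to recognize $f$ as the restriction to $\R^+$ of a linear fractional transformation and to read off its monotonicity from the sign of its derivative. Writing the length-$(n+2)$ continued fraction as $f(x)=[a_0,\dots,a_n,x]$ with $a_0=\dots=a_n=1$, the standard continuant recursion $[a_0,\dots,a_n,x]=\frac{xh_n+h_{n-1}}{xk_n+k_{n-1}}$ expresses $f$ in terms of the convergents $\frac{h_i}{k_i}$ of $\vf=[\dot1]$; by the discussion preceding the lemma these are $\frac{h_i}{k_i}=\frac{F_{i+2}}{F_{i+1}}$, so $f(x)=\frac{F_{n+2}x+F_{n+1}}{F_{n+1}x+F_n}$.

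Next I would differentiate: $f'(x)=\frac{h_nk_{n-1}-h_{n-1}k_n}{(k_nx+k_{n-1})^2}=\frac{F_{n+2}F_n-F_{n+1}^2}{(F_{n+1}x+F_n)^2}$, where $h_nk_{n-1}-h_{n-1}k_n=(-1)^{n+1}$ is the familiar determinant identity for convergents (equivalently, Cassini's identity $F_{n+2}F_n-F_{n+1}^2=(-1)^{n+1}$). This number is the determinant of $\begin{pmatrix}h_n&h_{n-1}\\k_n&k_{n-1}\end{pmatrix}\in GL_2(\Z)$, so in particular it is nonzero and $f$ is a genuine, non-degenerate M\"obius map.

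Finally, since every Fibonacci number in sight is positive, the denominator $F_{n+1}x+F_n$ is strictly positive for all $x\in\R^+$; equivalently, the unique pole of $f$ lies at $x=-F_n/F_{n+1}<0$, outside $\R^+$. Hence $f$ is differentiable on all of $\R^+$ with $f'$ of constant sign $(-1)^{n+1}$, so $f$ is strictly increasing when $n$ is odd and strictly decreasing when $n$ is even, which proves the lemma. There is essentially no real obstacle here beyond the bookkeeping; the only point demanding any care is confirming that the pole avoids $\R^+$, which is immediate from the positivity of the continuants $k_n=F_{n+1}$ and $k_{n-1}=F_n$ (and, more generally, would follow from $k_n,k_{n-1}>0$ for any continued fraction, which is why the lemma admits the considerable generalization alluded to in the text).
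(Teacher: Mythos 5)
Your proof matches the paper's: both compute $f(x)=\frac{F_{n+2}x+F_{n+1}}{F_{n+1}x+F_n}$ via the continuant recursion, differentiate, and observe that the numerator $F_{n+2}F_n-F_{n+1}^2=\pm1$ has fixed sign while the denominator is positive on $\R^+$. You are a bit more explicit than the paper (naming Cassini's identity and checking that the pole lies at $-F_n/F_{n+1}<0$), but the argument is the same.
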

	\begin{proof}
		Fix $x$. Then 
		$$f(x)=\frac{h_nx+h_{n-1}}{k_nx+k_{n-1}}=\frac{F_{n+2}x+F_{n+1}}{F_{n+1}x+F_n}$$
		which is clearly differentiable on $\R^+$, so taking the derivative gives 
		$$f'(x)=\frac{F_nF_{n+2}-F_{n+1}^2}{(F_{n+1}x+F_n)^2}=\pm\frac{1}{(F_{n+1}x+F_n)^2}$$
		which has constant sign in $x$. 
	\end{proof}
	This simple lemma equips us to characterize the set $\cT$. 
	\begin{proof}[Proof of \thmref{under nec}]
		We know from \cite{GV} that equivalence to $\vf$ is necessary, since if $\gt\not\asymp\vf$ then $\limsup(m+1)d_\gt(m)=\ell>\rho$ so for all $M_0\in\N$, there is $m>M_0$ with $(m+1)d_\gt(m)>\half(\ell+\rho)>\rho$. 
		
		We now show that equivalence to $\vf$ is sufficient. Write $\gt=[0,a_1\dots,a_N,\dot{1}]$, where for $n>N$ we have $a_n=1$. \cite{Moz} shows that when $a_n=1$ and $k_n+k_{n+1}-1\le m\le k_n+2k_{n+1}-2$, 
		$$\max(m+1)d_\gt(m)=\frac{1+2x_n-\frac{1}{k_n}}{\gt_{n+1}+\frac{1}{x_{n-1}}}=\frac{1+2x_n-\frac{1}{k_n}}{\vf+x_n-1}.$$
		We see that 
		\begin{equation}
			x_n<\vf+\frac{5+2\sqrt{5}}{k_n}\label{eq:thm}\tag{$\star$}
		\end{equation}
		is necessary and sufficient to show $\max(m+1)d_\gt(m)<\rho$ over that range for $m$, by algebraic manipulation. $x_n=[1,\dots,1,a_N,\dots,a_1]$ with $d=n-N$ 1's. By \lemref{monotone} and since $a_N\in\N$ implies $a_N\ge1$, $x_n$ is bounded between $g_{d+1}$ and $g_d$, so 
		$$\abs{\vf-x_n}\le\max\lcr{\abs{\vf-g_d},\abs{\vf-g_{d+1}}}<\frac{1}{F_{d+1}^2}.$$
		Since $k_n=F_{d+1}k_N+F_dk_{N-1}$, we simply require $F_{d+1}^2>\frac{F_{d+1}k_N+F_dk_{N-1}}{5+2\sqrt{5}}$. This holds if 
		\begin{equation}
			F_{d+1}>\frac{k_N+k_{N-1}}{5+2\sqrt{5}}\label{ineq:thm}\tag{$\star\star$}
		\end{equation}
		which, since $d$ is variable while $N$ is fixed, is eventually true. If we let $d_0$ be the least $d$ for which \eqref{ineq:thm} holds, and let $N_0=N+d_0$, then we see that \eqref{eq:thm} holds for $n\ge N_0$ and so the theorem holds for $M_0=k_{N_0}+k_{N_0+1}-1$. 
	\end{proof}

	We now investigate when the lower bound can be made $M_0=1$, and we let $\cS$ denote the set of such irrational numbers. Of course, by \thmref{under nec}, any such generator is equivalent to $\vf$. While $\cT$ is dense in $\R$, \thmref{under suff} asserts that $\cS$ is remarkably sparse: $\#(\cS\mod1)=16$. Towards this result, we prove two lemmas. The first establishes when the continued fractions of $\cS$'s elements must become $\dot1$. The second establishes upper bounds on the values that can appear in the prefix of those continued fractions. It is then merely a matter of verifying with the aid of a short computer program (\secref{python app}) which values suffice. 

	\begin{lemma}\label{when ones}
		Write $\gt=[0,a_1,\dots]$. Suppose $n\ge6$. If $a_n>1$ then $\gt\not\in\cS$. 
	\end{lemma}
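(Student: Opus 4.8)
The plan is to exhibit, for any $\gt=[0,a_1,\dots]$ with $a_n>1$ and $n\ge6$, one explicit index $m$ at which $(m+1)d_\gt(m)>\rho$; since $\gt\in\cS$ would force $(m+1)d_\gt(m)\le\rho$ for every $m$, this proves $\gt\notin\cS$. (Notably this needs no hypothesis that $\gt\asymp\vf$.) The key move is to apply \corref{cor:useful} at index $n-2$ with $\ga=a_n-2$: since $a_n>1$, this $\ga$ is a nonnegative integer with $\ga<a_n=a_{(n-2)+2}$, so the corollary yields
$$d_\gt(m)=\frac{\gt_n-(a_n-2)}{k_{n-2}+k_{n-1}\gt_n}\qquad\text{for}\qquad k_{n-2}+(a_n-1)k_{n-1}-1\le m\le k_{n-2}+a_nk_{n-1}-2.$$
Using the recurrence $k_n=a_nk_{n-1}+k_{n-2}$, the right endpoint of this window is exactly $m=k_n-2$; and writing $\gt_n=a_n+1/\gt_{n+1}$ collapses the expression to $d_\gt(k_n-2)=\dfrac{2\gt_{n+1}+1}{k_n\gt_{n+1}+k_{n-1}}$. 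I would take precisely $m=k_n-2$ (a legitimate index, since $k_n\ge21$ and the left endpoint $k_n-k_{n-1}-1$ does not exceed $k_n-2$ because $k_{n-1}\ge1$), giving
$$(m+1)d_\gt(m)=\frac{(k_n-1)(2\gt_{n+1}+1)}{k_n\gt_{n+1}+k_{n-1}}=:g(\gt_{n+1}).$$

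The next step is to bound $g$ from below uniformly over all admissible tails $\gt_{n+1}\ge1$. A one-line quotient-rule computation shows that the numerator of $g'$ is $(k_n-1)(2k_{n-1}-k_n)$, which is negative: from $a_n\ge2$ and $k_{n-2}\ge1$ we get $k_n=a_nk_{n-1}+k_{n-2}\ge2k_{n-1}+k_{n-2}>2k_{n-1}$. Hence $g$ is strictly decreasing on $[1,\infty)$, so $g(\gt_{n+1})>\lim_{t\to\infty}g(t)=2-\tfrac{2}{k_n}$. Finally I bound $k_n$ below: again since $a_n\ge2$, combined with $k_{n-1}\ge F_n$ and $k_{n-2}\ge F_{n-1}$ (Remark~\ref{rmk}), we have $k_n\ge2F_n+F_{n-1}=F_{n+2}$, so $n\ge6$ forces $k_n\ge F_8=21$. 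Therefore $(m+1)d_\gt(m)>2-\tfrac{2}{k_n}\ge\tfrac{40}{21}>\rho$, the last inequality because $\tfrac{40}{21}>1+\tfrac{2}{\sqrt5}$ is equivalent to $19\sqrt5>42$, i.e.\ $1805>1764$.

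The whole argument is elementary once the index $n-2$, the choice $\ga=a_n-2$, and the endpoint $m=k_n-2$ are in place, so the only points demanding care are (i) confirming that $m=k_n-2$ genuinely lies in the window of \corref{cor:useful} and is a true nonnegative index, and (ii) pinning down the numerical threshold. The hypothesis $n\ge6$ is exactly what this method needs and cannot be weakened: $F_8=21$ is the first Fibonacci number for which $2-2/F$ exceeds $\rho$, whereas $n=5$ (where $k_n$ can be as small as $F_7=13$) only gives $2-2/13=24/13<\rho$. This is consistent with \figref{unders}, whose members carry partial quotients $>1$ only among $a_1,\dots,a_5$.
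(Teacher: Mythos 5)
Your proof is correct, and it takes a genuinely different route from the paper's. The paper first discards the case $\gt\not\asymp\vf$ (citing \thmref{under nec}), then assumes towards contradiction that the tail of the continued fraction is all 1's past some $a_{N+1}>1$, applies \corref{cor:useful} at index $N$ with $\ga=0$ at $m=k_N+2k_{N+1}-2$, substitutes $\gt_{N+1}=a_{N+1}-1+\vf$ (which is where the all-1's-tail hypothesis is used), and then extracts a contradiction from a linear inequality in $k_N,k_{N-1}$ via numerical estimates and \rmkref{rmk}. Your argument is more local and needs no reduction to $\gt\asymp\vf$: you target any offending index $n$ directly, apply \corref{cor:useful} at index $n-2$ with the nontrivial choice $\ga=a_n-2$ (available precisely because $a_n>1$), take the right endpoint $m=k_n-2$ of the window, and then a clean monotonicity argument in $\gt_{n+1}$ gives the uniform lower bound $(m+1)d_\gt(m)>2-\tfrac{2}{k_n}\ge\tfrac{40}{21}>\rho$. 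What your approach buys is a sharper, tail-independent statement (any single $a_n>1$ with $n\ge6$ forces a violation at the explicit index $m=k_n-2$, regardless of what $a_{n+1},a_{n+2},\dots$ are) and a conceptually simpler estimate; what the paper's approach buys is a calculation that fits naturally with the way \cite{Moz} and the proof of \thmref{under nec} are already organized around the all-1's tail. I checked the details: $\ga=a_n-2\ge0$, $m=k_n-2$ does lie in the prescribed window since $k_{n-1}\ge1$, the derivative sign computation $(k_n-1)(2k_{n-1}-k_n)<0$ is right because $k_n\ge 2k_{n-1}+k_{n-2}>2k_{n-1}$, the Fibonacci bound $k_n\ge 2F_n+F_{n-1}=F_{n+2}\ge F_8=21$ follows from \rmkref{rmk}, and $\tfrac{40}{21}>\rho$ since $1805>1764$.
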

	\begin{proof}
		If $\gt\not\in GL_2(\z)\vf$ then we already know the result to hold, by \thmref{under nec}. So, we take $\gt\in GL_2(\z)\vf$. 
	
		Suppose towards contradiction that for some $N\ge5$, $a_n=1$ for all $n\ge N+2$, but $a_{N+1}>1$, yet $\gt\in\cS$. From \corref{cor:useful}, we have for $k_N+k_{N+1}-1\le m\le k_N+2k_{N+1}-2$: 
		$$d_\gt(m)=\frac{1}{\gt_{N+1}k_N+k_{N-1}}.$$
		It therefore follows that for $m=k_N+2k_{N+1}-2$: 
		$$(m+1)d_\gt(m)=\frac{2k_{N+1}+k_N-1}{\gt_{N+1}k_N+k_{N-1}}.$$
		Since $\gt\in\cS$, $(m+1)d_\gt(m)<\rho$. Rearranging the inequality, along with the substitutions 
		\begin{align*}
			\gt_{N+1}&=a_{N+1}-1+\vf\\
			k_{N+1}&=a_{N+1}k_N+k_{N-1},
		\end{align*}
		yields the following:
		$$((2-\rho)a_{N+1}+1+\rho-\rho\vf)k_N+(2-\rho)k_{N-1}<1.$$
		Using \rmkref{rmk}, the fact that $a_{N+1}\ge2$ by hypothesis, and numerical values of $\vf$ and $\rho$, we note that the left-hand side is lower-bounded by $0.04F_{N+1}+0.1F_N$, which, since $F_6=13$ and $F_5=8$, is lower-bounded by 1.3. This provides the desired contradiction and proves the result. 
	\end{proof}

	\begin{lemma}\label{abcde bounds}
		If $[0,a,b,c,d,e,\dot{1}]\in\cS$, then: 
		\begin{align*}
			a&\le18, & b&\le18, & c&\le14, & d&\le12, & e&\le11.
		\end{align*}
	\end{lemma}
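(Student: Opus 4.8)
The hypothesis, by \lemref{when ones}, says exactly that $\gt\in\cS$ has the shape $[0,a,b,c,d,e,\dot1]$ with $a_1=a,\dots,a_5=e$ and $a_n=1$ for $n\ge6$. The plan rests on the fact that $\gt\in\cS$ forces $(m+1)d_\gt(m)\le\rho$ for \emph{every} $m\ge1$, together with $a_j<\gt_j<a_j+1$; so for each $j\in\{1,\dots,5\}$ it is enough to pick one index $m$ at which \corref{cor:useful} converts the inequality $(m+1)d_\gt(m)\le\rho$ into an upper bound on $\gt_j$, and then to substitute the bounds already secured for the earlier quotients.

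The case $j=1$ I would treat by hand, since \corref{cor:useful} degenerates at the relevant level $n=-1$ (there $k_{-1}=0$). Take $m=1$: once $a\ge2$ the two points $0,\gt$ leave a largest gap $1-\gt$, so $2(1-\gt)\le\rho$, i.e.\ $\gt\ge\frac{2-\rho}2$; since $\gt<\frac1a$ this gives $a<\frac2{2-\rho}=10+4\sqrt5<19$, hence $a\le18$ (and $a=1$ needs no argument).

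For $b,c,d,e$ I would apply \corref{cor:useful} at the level $n\in\{0,1,2,3\}$ with $a_{n+2}$ the quotient in question, taking $\ga=0$ and $m=k_n+2k_{n+1}-2$ (the top of that block, which is admissible since $0<a_{n+2}$ and the block is nonempty). Then
\[
(m+1)d_\gt(m)=\frac{(k_n+2k_{n+1}-1)\,\gt_{n+2}}{k_n+k_{n+1}\gt_{n+2}}\le\rho,
\]
and since $\rho<2$ and $k_n,k_{n+1}\ge1$ keep $(2-\rho)k_{n+1}+k_n-1$ positive, this rearranges to
\[
\gt_{n+2}\le\frac{\rho\,k_n}{(2-\rho)k_{n+1}+k_n-1}.
\]
Now substitute $k_{n+1}\ge k_n+k_{n-1}$, the Fibonacci lower bounds on $k_n$ from \rmkref{rmk}, and the already-established ceilings on $a_1,\dots,a_{n+1}$, worst-casing over whichever configurations of those earlier quotients remain compatible; evaluating numerically with $\rho=1+\frac2{\sqrt5}$ gives a finite ceiling on $a_{n+2}$ at each stage, and running $n=0,1,2,3$ in order yields the bounds $b\le18$, $c\le14$, $d\le12$, $e\le11$ of the statement. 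Tightness is not at issue here—only finiteness—after which the short program of \secref{python app} settles $\cS$ by a finite search.

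The substance is the ordering rather than any single estimate: each $\gt_{n+2}$-bound involves $k_n,k_{n+1}$ and hence all of $a_1,\dots,a_{n+1}$, so the five steps must be carried out strictly left to right, and at each one must check that the chosen $m$ genuinely lies in the admissible interval of \corref{cor:useful} and take the real worst case over the still-unbounded earlier quotients rather than a convenient special value. That worst-casing, together with the $n=-1$ degeneracy handled separately above, are the only real obstacles; the rest is routine arithmetic with $\rho$ and $\vf$.
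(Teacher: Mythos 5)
Your approach is essentially the paper's: both proofs apply the Slater--S\'os corollary at $\ga=0$ on the block whose rightmost element is $m=k_n+2k_{n+1}-2$ and rearrange $(m+1)d_\gt(m)\le\rho$ into a bound on the next partial quotient. Your inequality $\gt_{n+2}\le\frac{\rho k_n}{(2-\rho)k_{n+1}+k_n-1}$ is correct and is the same inequality the paper starts from. The divergence is in what you do next.

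The paper substitutes $\gt_n=a_n+\frac1{\gt_{n+1}}$ and $k_n=a_nk_{n-1}+k_{n-2}$, applies $\gt_{n+1}\ge1$, and reads off
\[
a_n<\frac{\rho-1}{2-\rho}+\frac{k_{n-2}}{k_{n-1}}+\frac{1}{(2-\rho)k_{n-1}}<\frac{\rho-1}{2-\rho}+1+\frac{1}{(2-\rho)F_n},
\]
which is a bound on $a_n$ that is \emph{uniform} in $a_1,\dots,a_{n-1}$ and produces exactly the stated $18,18,14,12,11$ as $n$ runs over $[5]$. Because $k_{-1}=0$ appears only additively, the $n=1$ case needs no special treatment. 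Your version leaves $k_n,k_{n+1}$ in the bound and proposes to ``worst-case over whichever configurations of earlier quotients remain compatible,'' a left-to-right recursion you never actually execute. If you do execute it, the optimum is attained at $a_1=\dots=a_{n+1}=1$, and you get strictly \emph{tighter} numbers than you state (e.g.\ $b\le17$, $c\le8$), so the claim that ``running $n=0,1,2,3$ yields $b\le18,c\le14,d\le12,e\le11$'' is not what your formula gives. This doesn't break the lemma (tighter bounds subsume the stated ones), but it is an asserted-not-verified step and also means your proof does not match the lemma's displayed constants. Relatedly, your perceived degeneracy at $n=-1$ is an artifact of choosing to isolate $\gt_{n+2}$ with $k_n$ in the numerator; the paper's substitution places $k_{n-1}=k_0=1$ (not $k_{-1}=0$) in the dominant position and so handles $a_1$ in the same sweep, making your hand treatment of $m=1$ unnecessary. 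In short: same mechanism, but the paper's rearrangement is cleaner and actually produces the advertised constants, whereas your version requires a multi-stage optimization you only gesture at, and your final sentence conflates the bounds your method would give with the bounds the lemma states.
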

	\begin{proof}
		Consider any $\gt=[0,a_1,\dots]\in\cS$, and fix $n\in[5]$. We know that we have for $k_{n-1}+(\ga+1)k_n-1\le m\le k_{n-1}+(\ga+2)k_n-2$, $d_\gt(m)=\frac{1}{\gt_nk_{n-1}+k_{n-2}}$ and so $(m+1)d_\gt(m)$ attains its maximum on this range: 
		$$\frac{k_{n-1}+(\ga+2)k_n-1}{\gt_nk_{n-1}+k_{n-2}}.$$
		In order for this value to be less than $\rho$ (a necessary---but far from sufficient---condition for $\gt\in\cS$), we must have, for $\ga=0$:
		$$k_{n-1}+2k_n-1<\rho(\gt_nk_{n-1}+k_{n-2}).$$
		Using the substitutions
		\begin{align*}
			\gt_n&=a_n+\frac{1}{\gt_{n+1}}\\
			k_n&=a_nk_{n-1}+k_{n-2}
		\end{align*}
		we apply the fact that $\gt_{n+1}\ge1$ and rearrange to obtain 
		$$(2-\rho)k_{n-1}a_n+k_{n-1}\lpr{1-\frac{\rho}{\gt_{n+1}}}+(2-\rho)k_{n-2}<1$$
		and therefore 
		\begin{align*}
			a_n&<\frac{\rho-1}{2-\rho}+\frac{k_{n-2}}{k_{n-1}}+\frac{1}{(2-\rho)k_{n-1}}\\
				 &<\frac{\rho-1}{2-\rho}+1+\frac{1}{(2-\rho)F_n}.
		\end{align*}
		Using the numerical value of $\rho$ and letting $n$ range on [5] gives the desired bounds. 
	\end{proof}

	\begin{proof}[Proof of \thmref{under suff}]
		\lemref{when ones} and \lemref{abcde bounds} are sufficient to prove that $\#(S\mod1)<\infty$. Running the code specified in \secref{python app} reveals the values specified in \figref{unders}. All that remains to be shown is the correctness of the program; each step is evident except for why \texttt{n} only needs to be checked up to 29. This is merely a consequence of \eqref{ineq:thm} for $N=5$, specifically in the ``worst case'' (in terms of the sizes of $k_4$ and $k_5$) of $\gt=[0,18,18,14,12,11,\dot{1}]$, where $k_4=55141$ and $k_5=611119$ so $F_{d+1}>\frac{k_5+k_4}{5+2\sqrt{5}}\approx70000$, hence $d=24$. Because this justifies the code used, the Theorem is true. 
	\end{proof}
	To demonstrate the empirical difference between $\cS$ and a worse choice of $\gt$, see \figref{eye test} for the partition of the circle for $m=75$ for each element of $\cS$ as well as $\gt=\pi$. Stylistically, these diagrams are inspired by Motta, Shipman, and Springer's Figure 1 \cite{MSS}. When there are three distinct lengths, the longest one is colored red and the shortest green; when there are two distinct lengths (\figref{eye test}(d)), the longer one is colored orange and the shorter black. The code for this figure is found in \secref{mathematica app}. 
	
	\begin{figure}
		\begin{tabular}{ccc}
		\includegraphics[width=0.25\textwidth]{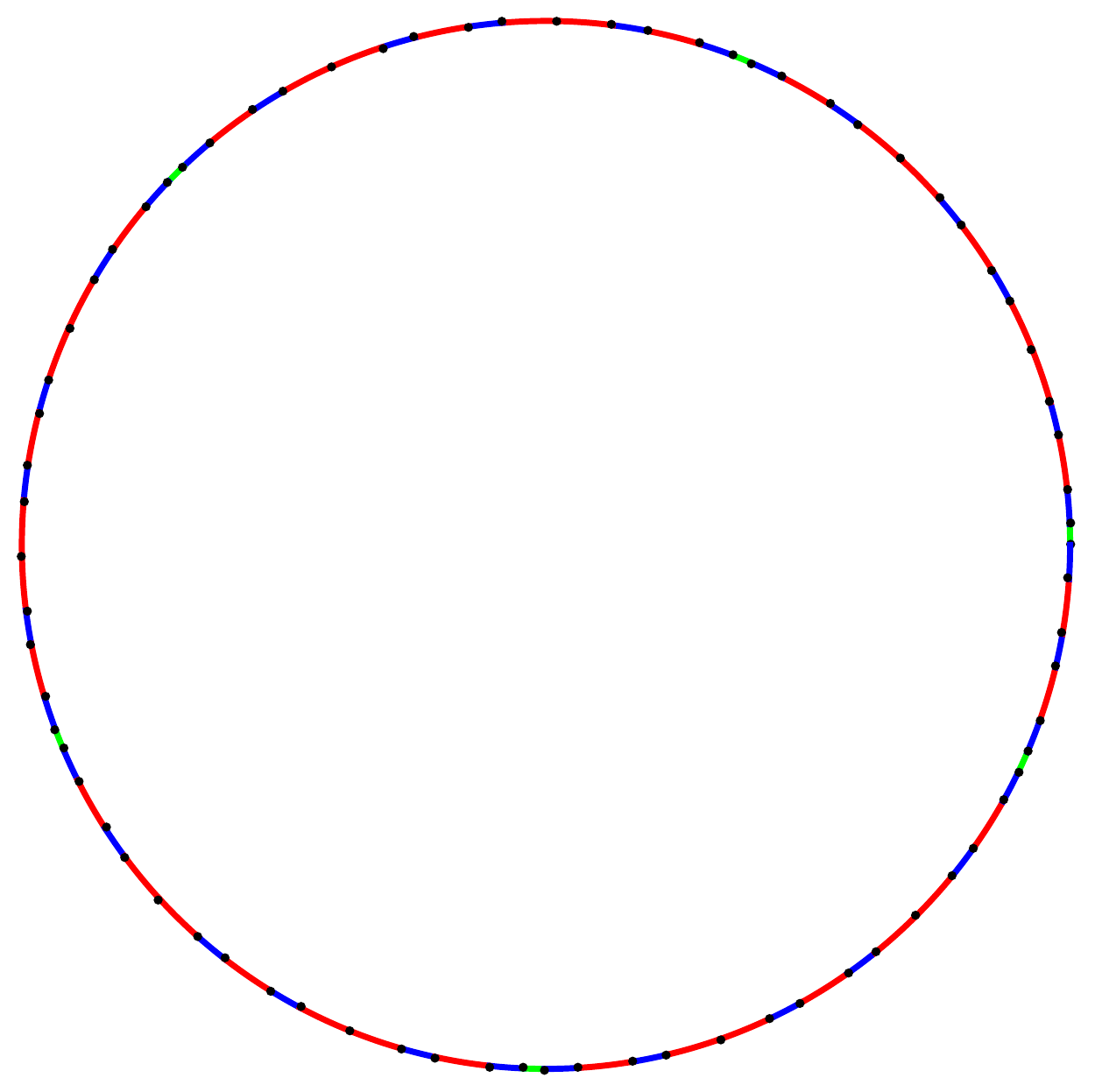} & \includegraphics[width=0.25\textwidth]{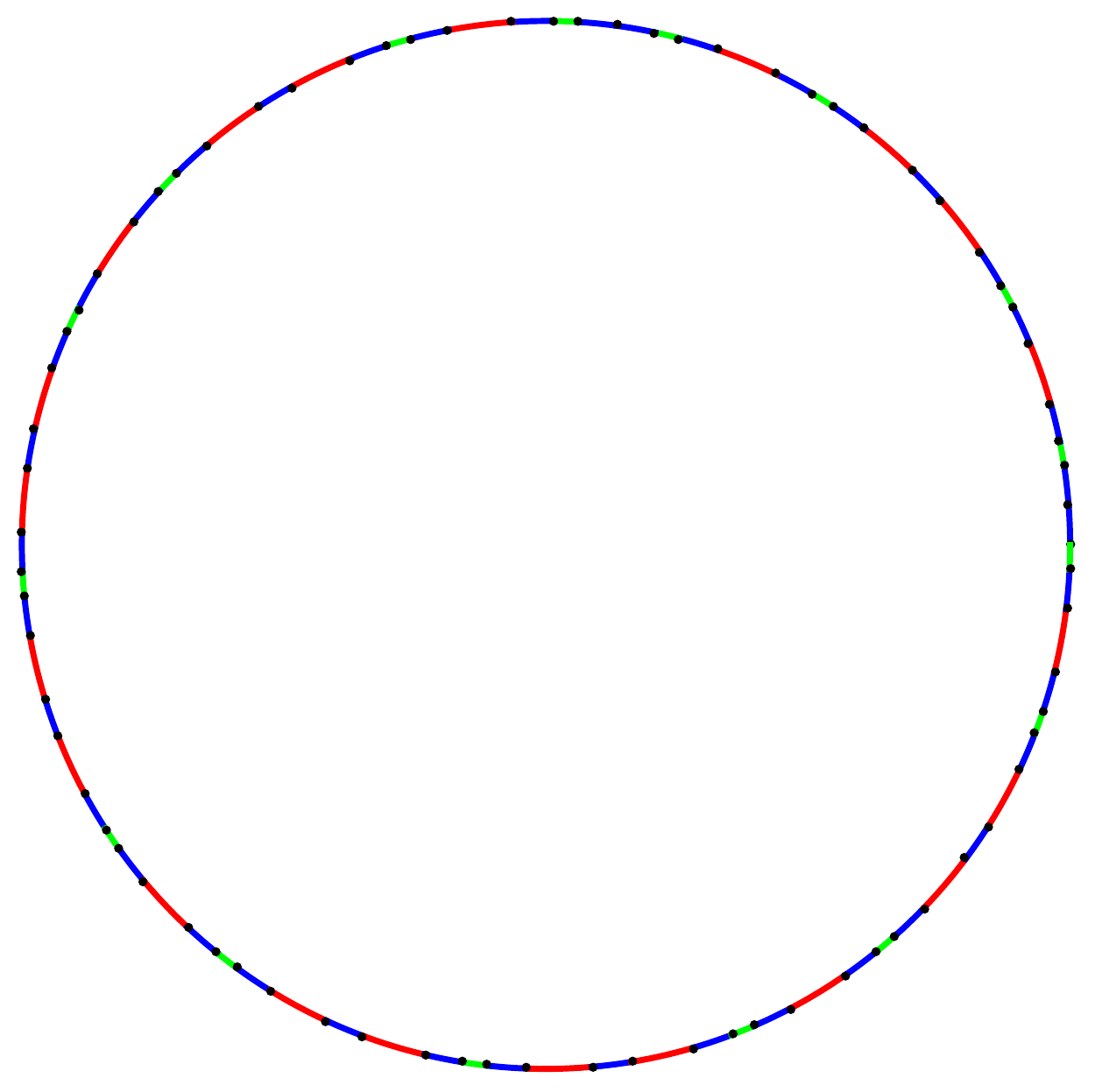} & \includegraphics[width=0.25\textwidth]{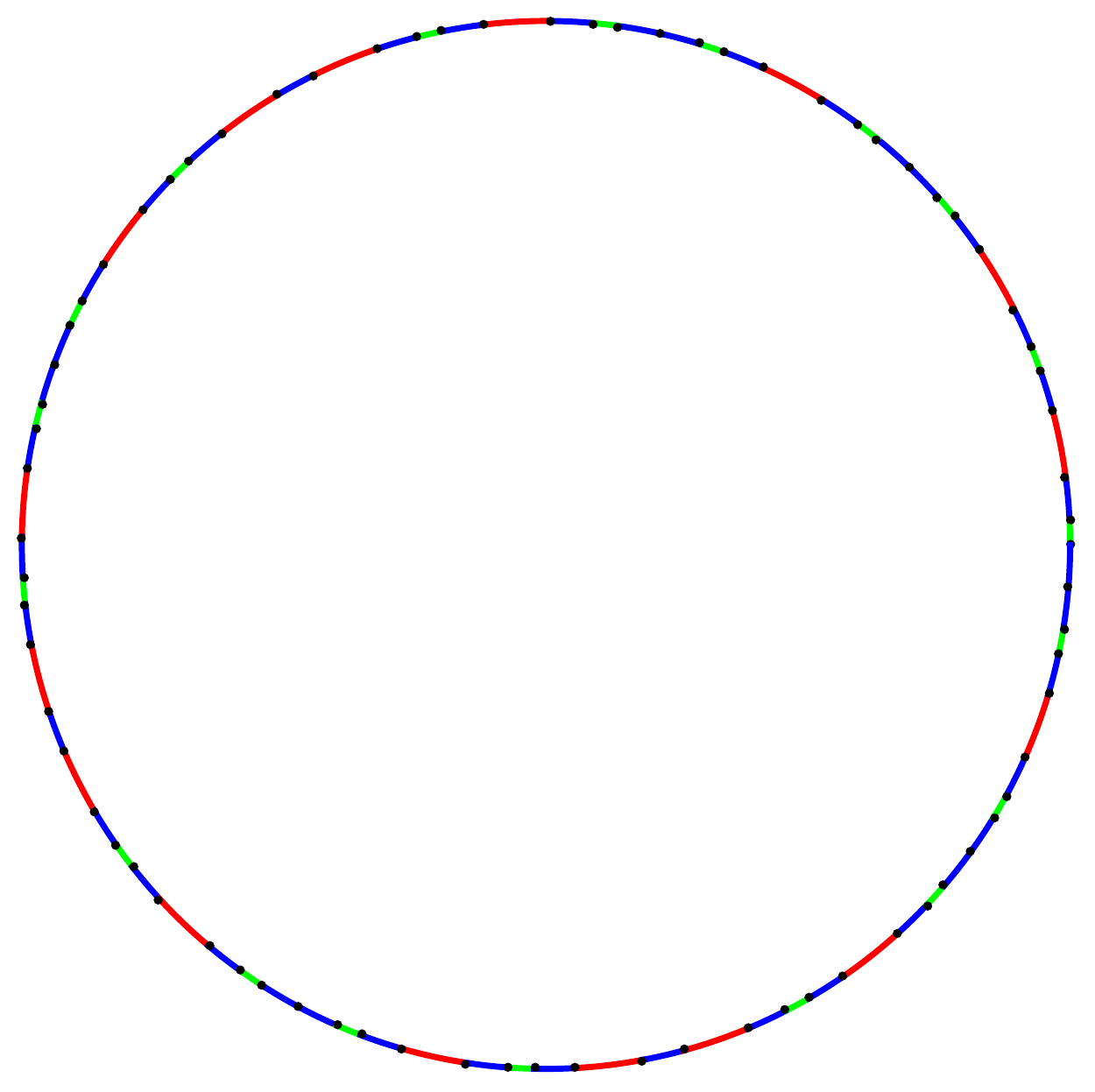} \\
			(a) $\eta_1$ & (b) $\eta_2$ & (c) $\eta_3$ \bigstrut[b] \\
		\includegraphics[width=0.25\textwidth]{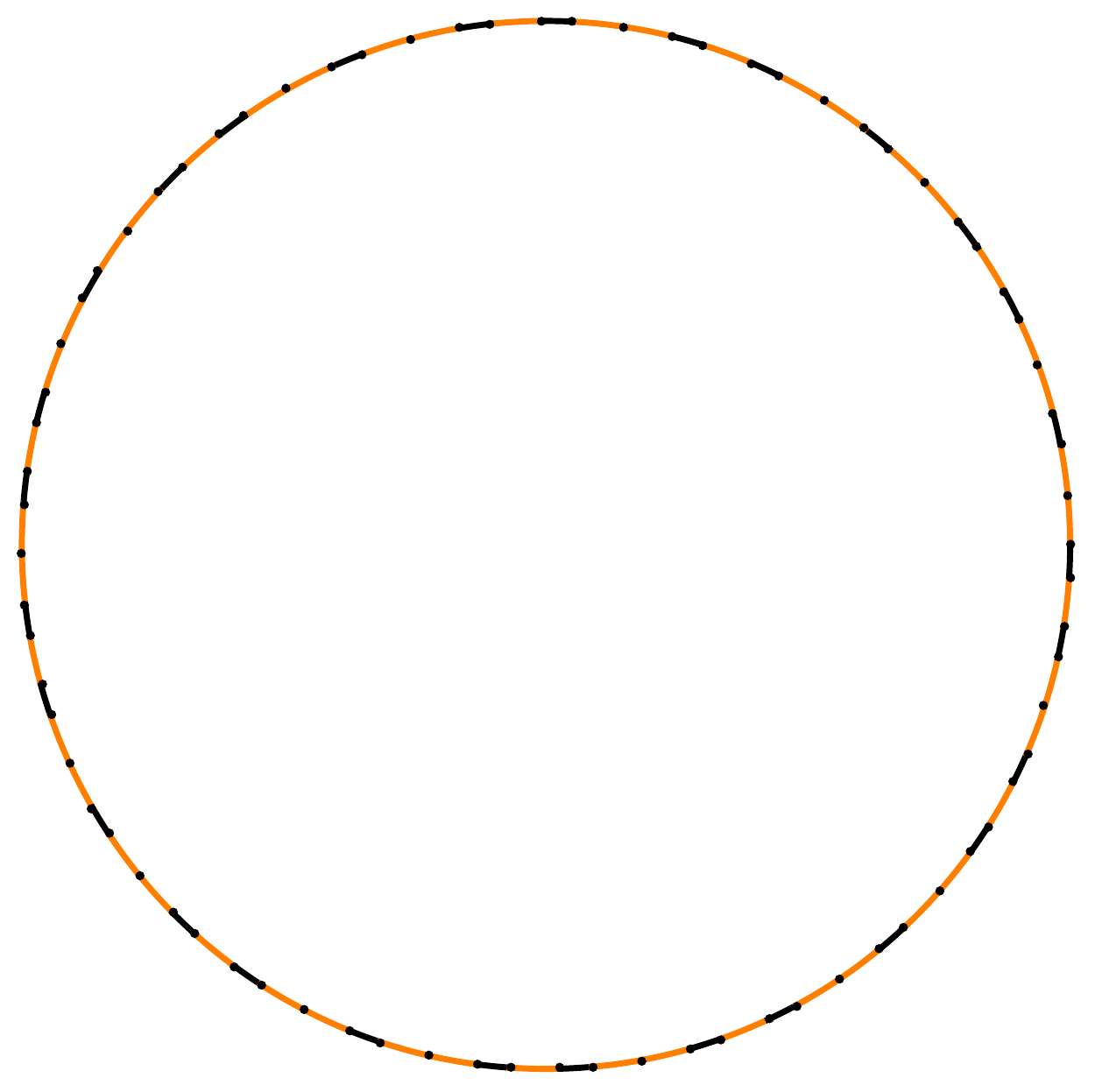} & \includegraphics[width=0.25\textwidth]{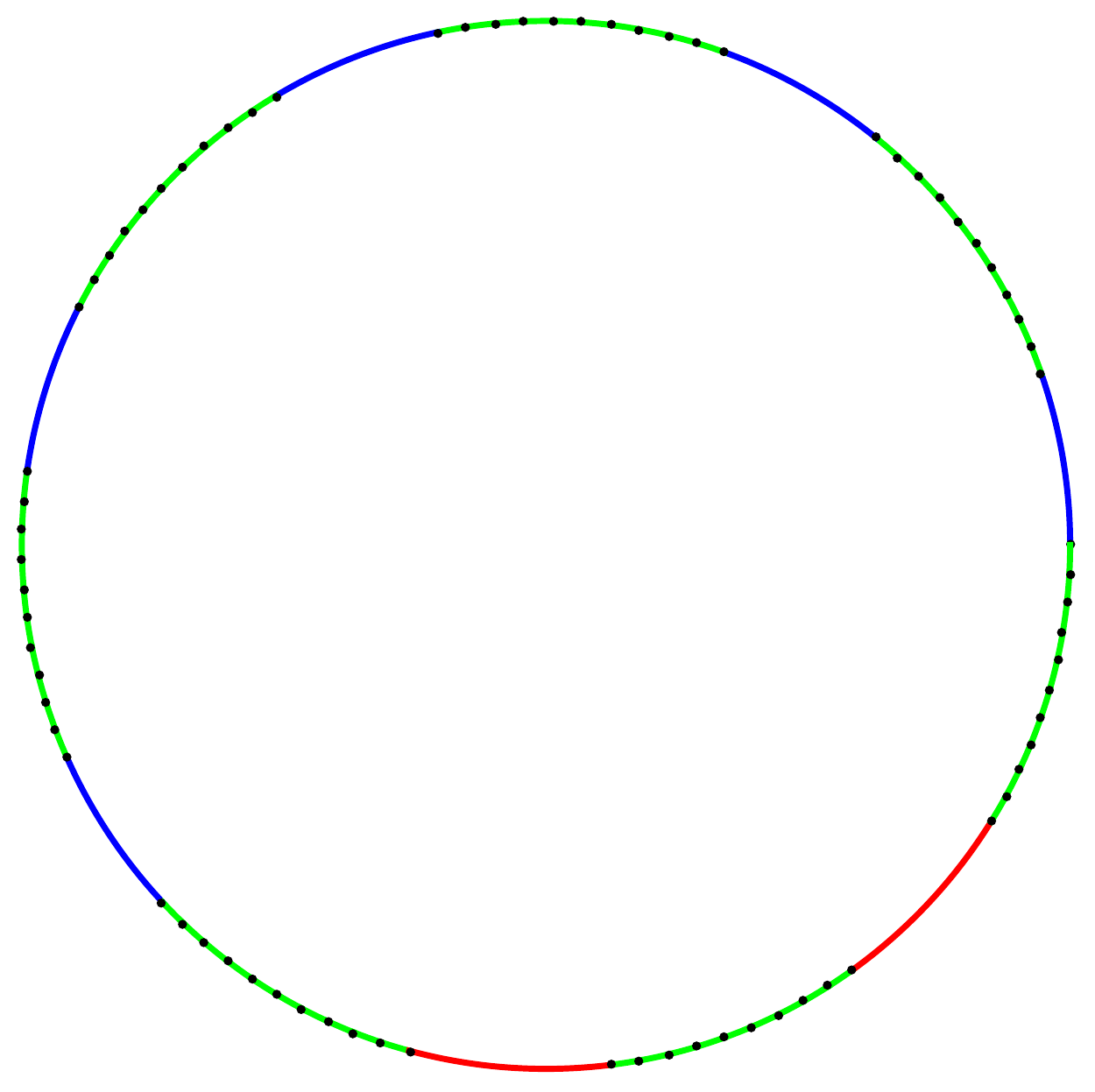} & \includegraphics[width=0.25\textwidth]{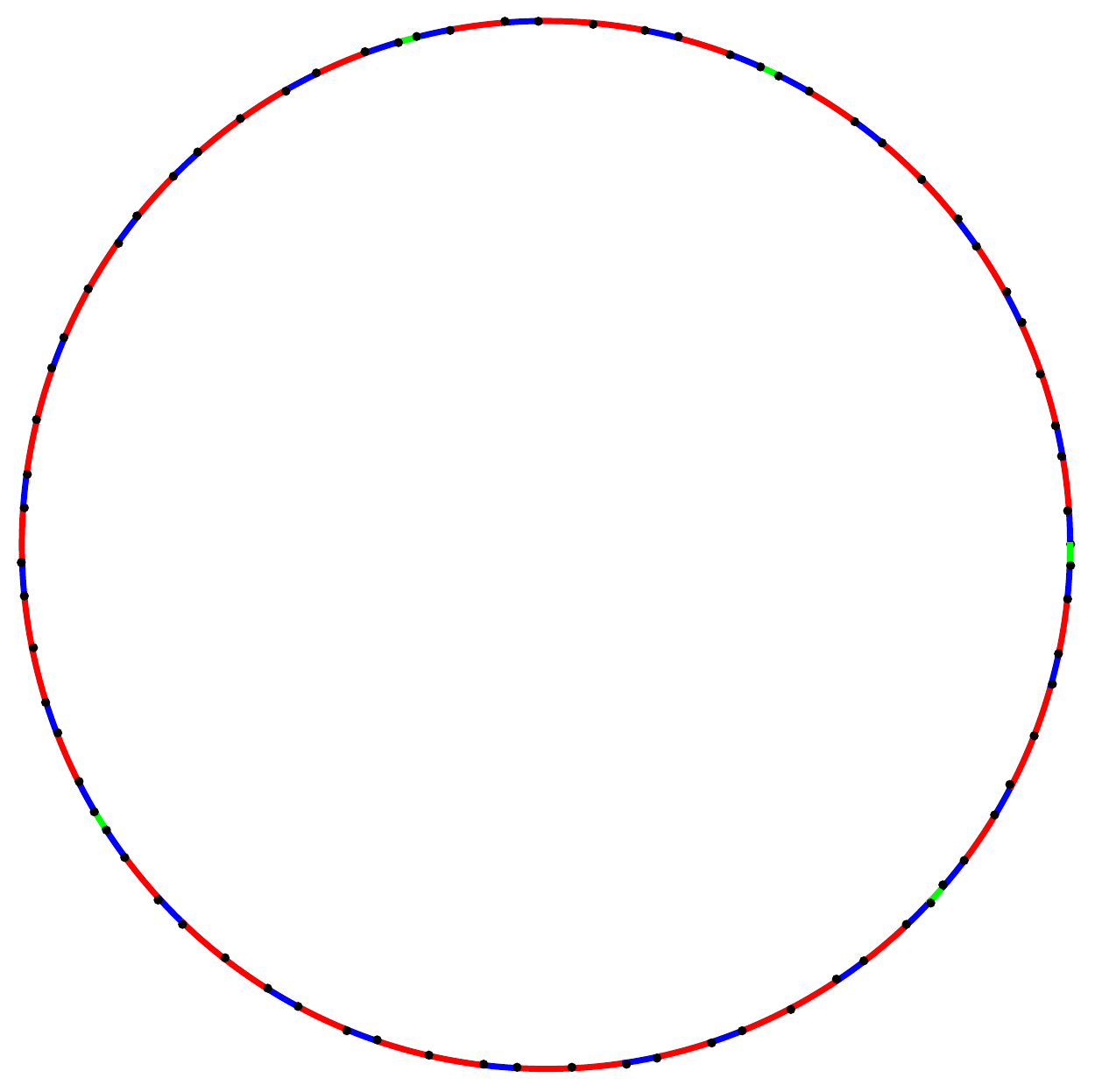} \\
			(d) $\eta_4$ & (e) $\pi$ & (f) $\eta_5$ \bigstrut[b] \\
		\includegraphics[width=0.25\textwidth]{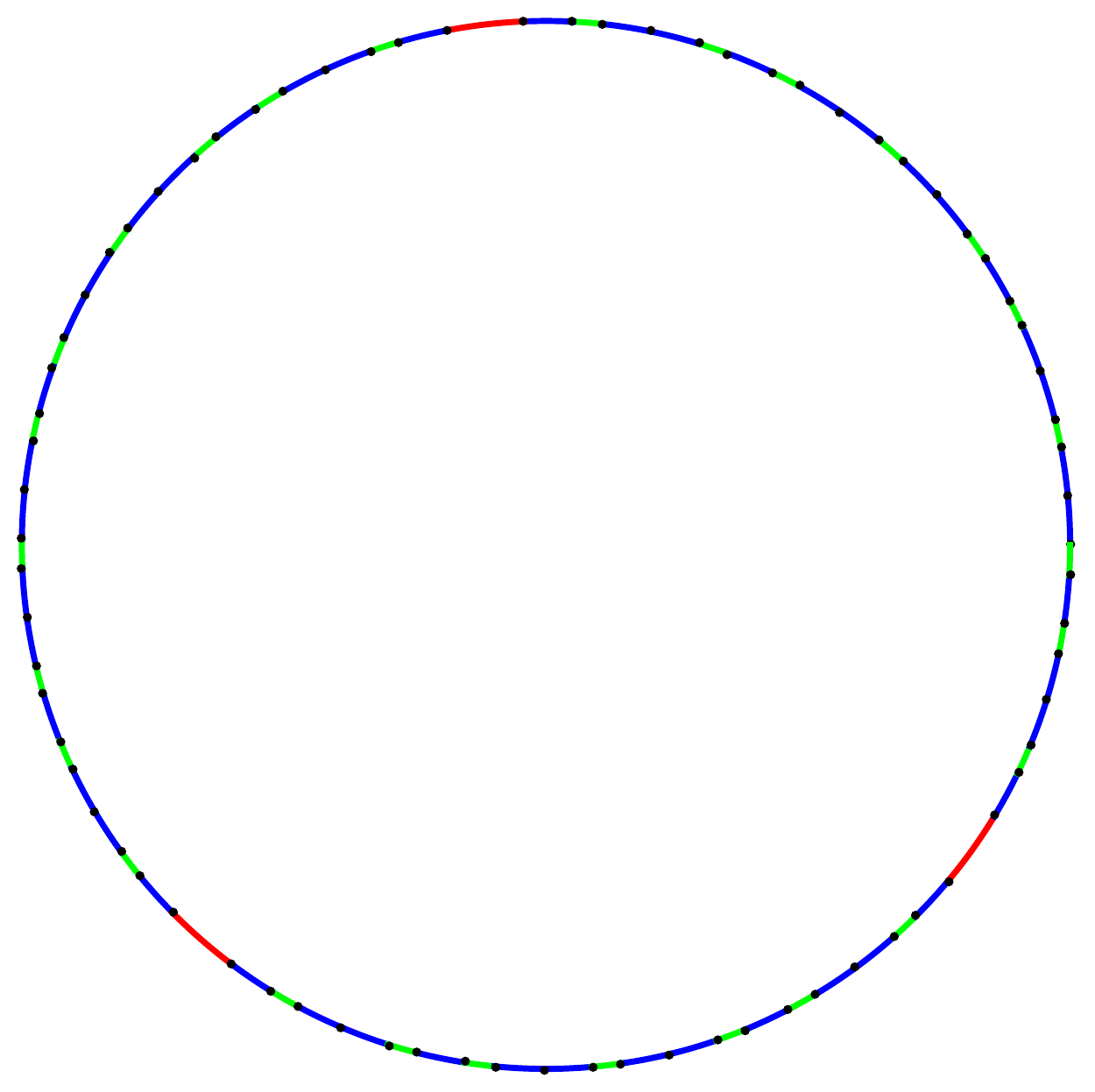} & \includegraphics[width=0.25\textwidth]{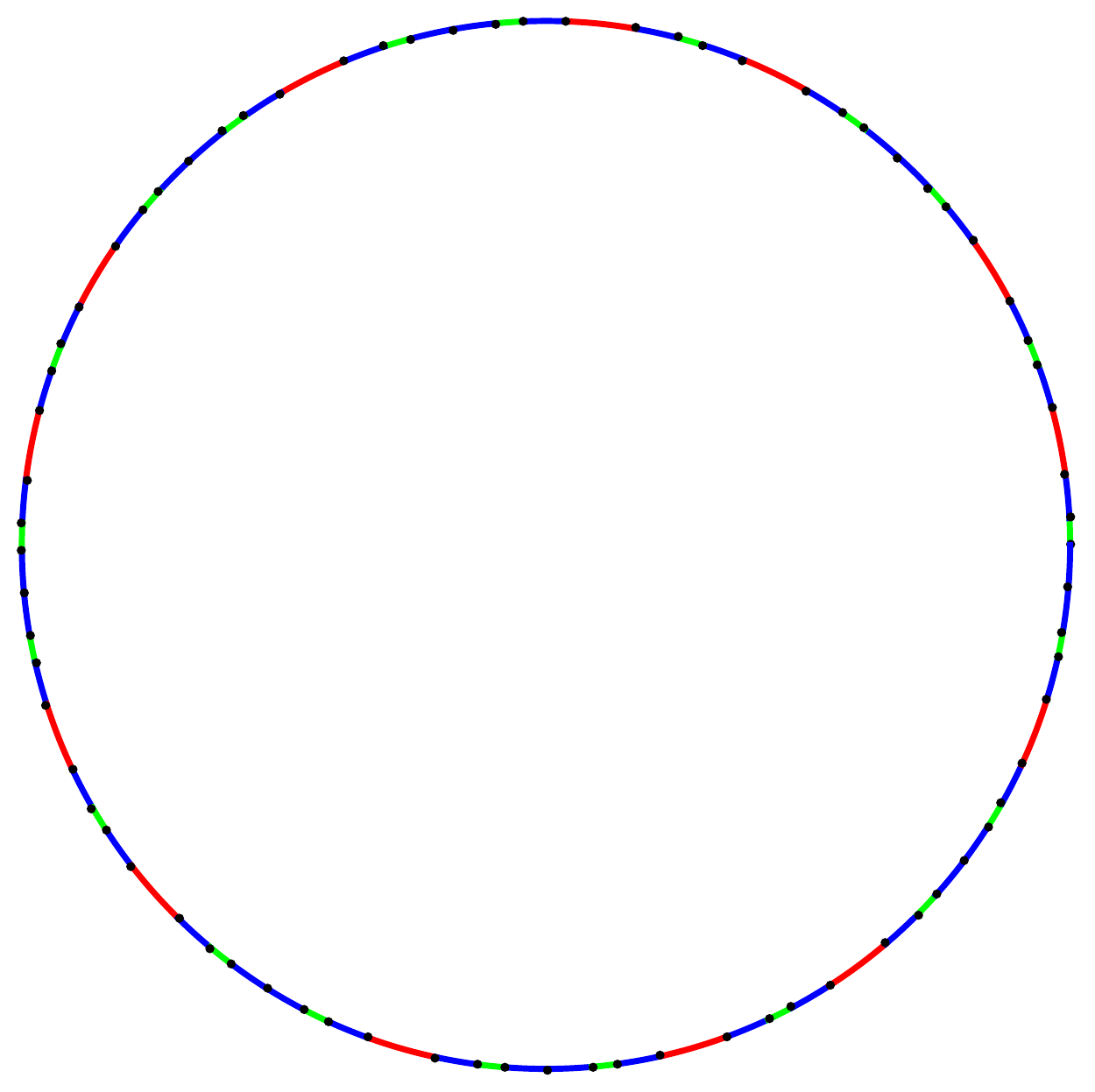} & \includegraphics[width=0.25\textwidth]{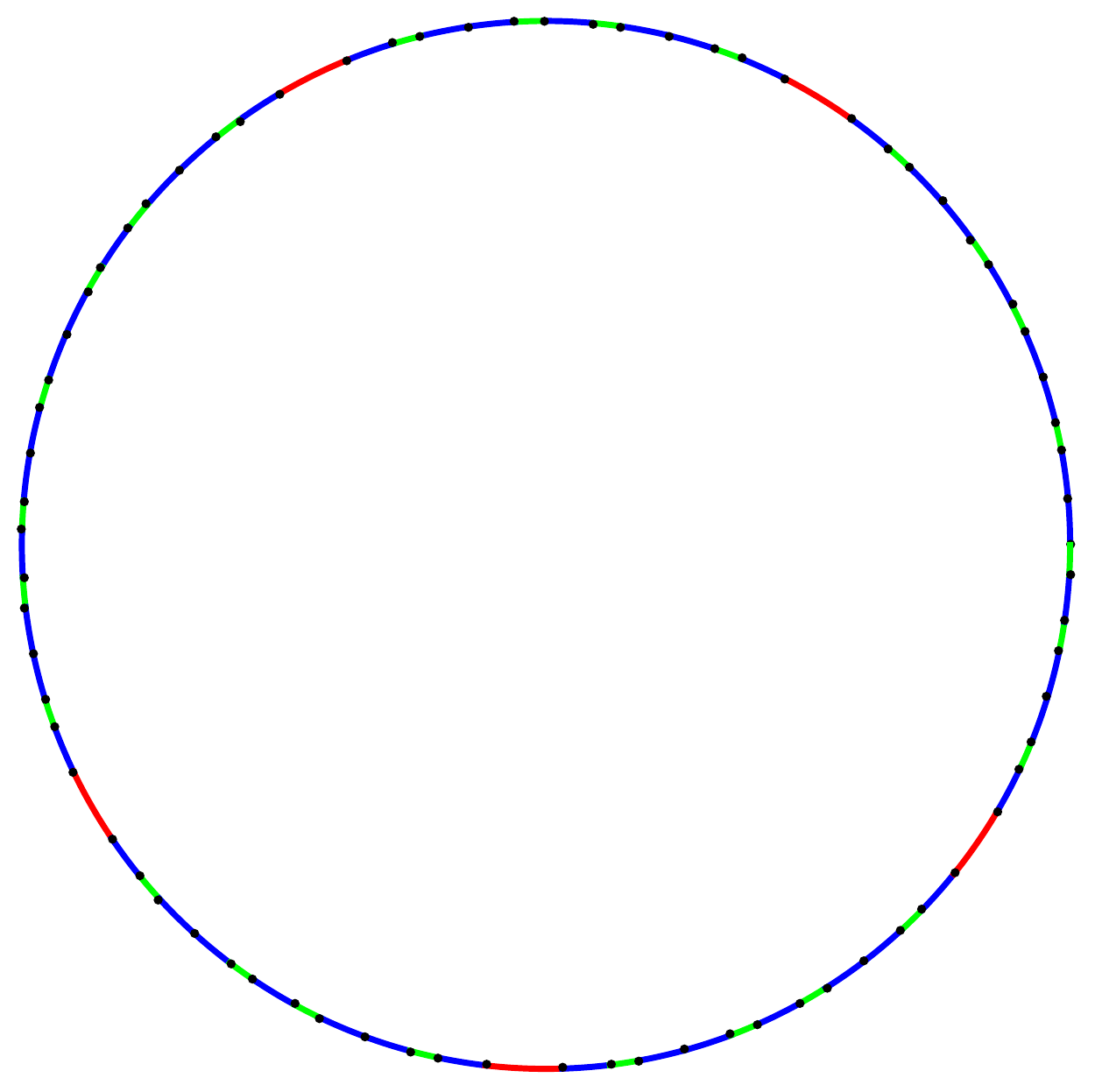} \\
			(g) $\eta_6$ & (h) $\eta_7$ & (i) $\eta_8$
		\end{tabular}
		\caption{The partition of $S^1$ for nine values of $\gt$ with $m=75$. Note that for (e), the partition is far less uniform than in the other figures.}
		\label{eye test}
	\end{figure}	
	
\section{Proof of \thmref{no best}}
	
	\begin{remark-non}
		Let $\bowtie$ be the equivalence relation on $\cS$ of $\gt\bowtie\gu$ iff $\gt\pm\gu\in\Z$. Clearly $\#(\cS/\bowtie)=8$, and for $\gt,\gu\in\cS$, $d_\gt(m)=d_\gu(m)$ iff $\gt\bowtie\gu$. Therefore, $f_m:(\cS/\bowtie)\to\R^+$ with $f_m(\gt)=d_\gt(m)$ is well-defined. $\cS/\bowtie$ has the convenient choice of representatives $\{\eta_i:i\in[8]\}$. 
	\end{remark-non}
	
	As a consequence of this remark, we treat $\cS$ implicitly as $\cS/\bowtie$ because of our primary concern with the context of $d_\gt(m)$. We now introduce some further notation. 
	\begin{definition-non}
		Define the functions $w:\N\to\cS$ and $W:\cS\times\N\to\R$ as
		\begin{align*}
			w(M)&=\argmin\limits_{\gt\in\cS} D_M(\gt)\\
			W_\gt(M)&=\#\{m\in[M]:\gt=w(m)\}. 
		\end{align*}
		We have the shorthand
		\begin{align*}
			LI(i)&=\liminf\limits_{M\to\infty}\frac{W_{\eta_i}(M)}{M}\\
			LS(i)&=\limsup\limits_{M\to\infty}\frac{W_{\eta_i}(M)}{M}.
		\end{align*}
	\end{definition-non}
	We now begin our approach towards \thmref{no best}. It is an immediate corollary to the following: 
	\begin{theorem}\label{help 3}
		We have the following asymptotics, where the third and fifth column the give the percentages rounded to the nearest tenth: 
		$$
		\begin{array}{c||c|r||c|r||}
			i & \multicolumn{2}{c||}{LI(i)} & \multicolumn{2}{c||}{LS(i)} \\\hline\hline
			1 & \frac{4\sqrt{5}-6}{11} & 26.8 &\frac{13+\sqrt{5}}{41} & 37.2	\bigstrut \\\hline 
			2 & \frac{7-2\sqrt{5}}{29} & 8.7 & \frac{2-3\sqrt{5}}{11} & 13.4 \bigstrut \\ \hline 
			3 & 9-4\sqrt{5} & 5.6 & \frac{7-2\sqrt{5}}{29} & 8.7 \bigstrut \\\hline 
			4 & \frac{11-3\sqrt{5}}{38} & 11.3 & \frac{3\sqrt{5}-5}{10} & 17.1 \bigstrut \\ \hline 
			5 & \frac{19-8\sqrt{5}}{41} & 2.7 & \frac{12-5\sqrt{5}}{19} & 4.3 \bigstrut \\ \hline 
			6 & \frac{7\sqrt{5}-15}{10} & 6.5 & \frac{13-3\sqrt{5}}{62} & 10.1 \bigstrut \\ \hline 
			7 & \frac{4-\sqrt{5}}{11} & 16.0 & \sqrt{5}-2 & 23.6 \bigstrut \\\hline 
			8 & \frac{27-11\sqrt{5}}{62} & 3.9 & \frac{17-7\sqrt{5}}{22} & 6.1 \bigstrut \\ \hline 
		\end{array}
		$$
		
		In particular, each of the $\liminf$s is positive. 
	\end{theorem}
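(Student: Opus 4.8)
The plan is to reduce Theorem~\ref{help 3} to an explicit, finite computation governed by the exact formula for $d_\gt(m)$ in \corref{cor:useful}. The first step is to understand, for each fixed $\gt\in\cS$, the eventual behavior of $D_M(\gt)=\max_{m\in[M]}(m+1)d_\gt(m)$. Since every $\eta_i$ has an eventually-$\dot1$ continued fraction, \corref{cor:useful} with $\ga$ ranging over the Slater--S\'os windows shows that on each block $k_n+k_{n+1}-1\le m\le k_n+2k_{n+1}-2$ the quantity $(m+1)d_\gt(m)$ is an explicit rational function of $m$ whose maximum over the block I can write down in closed form; because $D(\eta_i)=\rho$ (Theorem~\ref{under suff}), these block maxima converge to $\rho$ from below, and in fact $D_M(\eta_i)$ is a nondecreasing step function of $M$ that is eventually determined by the ``record'' blocks. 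The key structural fact I would extract is that $D_M(\eta_i)$, as $M\to\infty$, is asymptotically $\rho$ minus a quantity of order $1/k_n^2$ on the $n$th Fibonacci-length block, with the leading constant computable from \eqref{diff w convergent} and the limiting tails $\gt_{n}\to\vf$, $x_{n}\to\vf$.

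The second step is to compare these eight step functions. For $M$ in a given range, $w(M)=\argmin_{\gt\in\cS}D_M(\gt)$, so I need to know, for each $M$, which $\eta_i$ currently has the smallest running maximum. Because each $D_M(\eta_i)$ only jumps at the endpoints of the relevant Slater--S\'os blocks — and those endpoints are essentially at Fibonacci-scaled multiples of the denominators $k_4^{(i)},k_5^{(i)}$ appearing in the matrices of \figref{unders} — the set $\{M:w(M)=\eta_i\}$ is, for large $M$, a union of intervals whose endpoints grow geometrically with ratio $\vf^2$. Consequently $W_{\eta_i}(M)/M$ does \emph{not} converge, but oscillates, and both $LI(i)=\liminf W_{\eta_i}(M)/M$ and $LS(i)=\limsup W_{\eta_i}(M)/M$ are attained along subsequences $M\to\infty$ that sit at the ``extreme'' phases of this geometric oscillation. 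The heart of the argument is therefore: (a) identify the eventual periodic pattern (in $\log M$, with period $\log\vf^2$) of which $\eta_i$ is the minimizer, and (b) within one period, compute the measure-theoretic proportion of the logarithmic scale on which $\eta_i$ wins, then convert the $\liminf$/$\limsup$ of $W_{\eta_i}(M)/M$ into the stated closed forms. Step (b) is where the numbers $\frac{4\sqrt5-6}{11}$, $\sqrt5-2$, etc.\ come from: they are ratios of geometric-series partial sums with ratio $\vf^{-2}=\frac{3-\sqrt5}{2}$, which is exactly why $\sqrt5$ appears linearly and the denominators $10,11,19,22,29,38,41,62$ match the norms $k^2-k'^2$-type quantities visible in \figref{unders}.

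Concretely, I would (i) for each pair $i\ne j$ solve the inequality $D_M(\eta_i)<D_M(\eta_j)$ asymptotically, reducing it via the order-$1/k^2$ expansions to a comparison of the form $c_i\,\vf^{-2n_i}\gtrless c_j\,\vf^{-2n_j}$ for integer block-indices $n_i,n_j$ tied to $M$ through $k_n^{(i)}\approx M$; (ii) assemble these pairwise comparisons into the full ``winner'' function on a fundamental domain of the $\vf^2$-scaling; (iii) read off the interval of $\log M$ on which $\eta_i$ is simultaneously below all seven others; (iv) compute $\liminf$ and $\limsup$ of $W_{\eta_i}(M)/M = \frac1M\#\{m\le M: w(m)=\eta_i\}$ by noting that along the scaling $M\mapsto\vf^2 M$ the count $W$ and the bound $M$ both scale, so the ratio is a continuous function of the phase $\{\log_{\vf^2}M\}$, whose inf and sup over $[0,1)$ give the table; (v) since each winning interval has positive length, each $LI(i)>0$, giving the ``in particular'' clause and hence Theorem~\ref{no best}. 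I expect the main obstacle to be step (ii)–(iii): the pairwise comparisons are clean individually, but reconciling all $\binom{8}{2}$ of them to pin down \emph{exactly} the fundamental-domain subinterval where a given $\eta_i$ beats every competitor — including handling the boundary blocks where two step functions jump at nearly the same $M$ and one must check which jump happens first — is fiddly, and is presumably where a short computer verification (as used for Theorem~\ref{under suff}) does the bookkeeping; the continued-fraction and geometric-series estimates in steps (i), (iv), (v) are routine given \corref{cor:useful} and \eqref{diff w convergent}.
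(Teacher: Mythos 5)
Your overall architecture matches the paper's: obtain an exact formula for $d_{\eta_i}(m)$ via Corollary~\ref{cor:useful}, observe that the winner $w(M)$ settles into a periodic cycle through the eight classes, and then compute $LI(i)$ and $LS(i)$ by evaluating $W_{\eta_i}(M)/M$ at the extreme phases of the cycle (just before a winning interval opens, and just after one closes). The paper does exactly this: Lemma~\ref{cycle} establishes the interleaving $K_n(1)<\cdots<K_n(8)<K_{n+1}(1)$ that forces the cyclic order, Lemma~\ref{sigma and tau formulas} gives closed forms for the endpoints $\gs_n(i),\tau_n(i)$ of the $n$th winning interval, and Theorem~\ref{help 3} then follows from the two limits $\lim_n\frac{\tau_n-\gs_n}{\gs_{n+1}-\gs_n}$ and $\lim_n\frac{\tau_n-\gs_n}{\tau_n-\tau_{n-1}}$.

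There is, however, a concrete error in your scaling analysis. You assert that the winning intervals grow geometrically with ratio $\vf^2$ and that the pattern has period $\log\vf^2$ in $\log M$. This is off by a factor of two in the exponent: the relevant denominators are $K_n(i)=a_iF_n+b_iF_{n-1}$, so $K_{n+1}(i)/K_n(i)\to\vf$, and Lemma~\ref{cycle} shows that all eight winning windows fit inside a \emph{single} factor-$\vf$ block $[K_n(1),K_{n+1}(1))$; hence $\gs_{n+1}(i)/\gs_n(i)\to\vf$, not $\vf^2$. The factor $\vf^2$ does appear, but only later and for a different reason: when you sum the geometric series of interval lengths (ratio $\vf^{-1}$) to get $W_{\eta_i}$ you pick up the amplification factor $\frac{\vf}{\vf-1}=\vf^2$. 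Conflating the multiplicative period of the cycle with this series factor would lead you to the wrong closed forms. Relatedly, you defer the ``fiddly'' pairwise bookkeeping to a computer check, but the paper handles it analytically: once the cyclic order is fixed by Lemma~\ref{cycle}, only adjacent pairs $\eta_{\copi(i-1)},\eta_{\copi i}$ ever need to be compared, and the crossover point is an explicit algebraic solution of $(m+1)d_{\eta_{\copi i}}(m)\ge D_{M}(\eta_{\copi(i-1)})$ evaluated at the record $M=M_n(\copi(i-1))$. So the gap to close is not really combinatorial bookkeeping but rather correcting the period, writing down $\gs_n(i),\tau_n(i)$ exactly, and taking the limits of the stated ratios; without those closed forms the table entries cannot be produced.
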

	
	As an illustration of the alternating nature for small $M$, see \figref{small M}, where if $\eta_i=\argmin\limits_{\gt\in\cS}D_M(\gt)$ then the $M$th data point $\lpr{M,\min\limits_{\gt\in\cS}D_M(\gt)}$ is colored with the $i$th color in the following list: red, orange, purple, green, blue, brown, black, aquamarine. The code used to generate this figure can be found in \secref{code:small M}. 
	
	\begin{figure}
		\centering
		\includegraphics[width=0.75\textwidth]{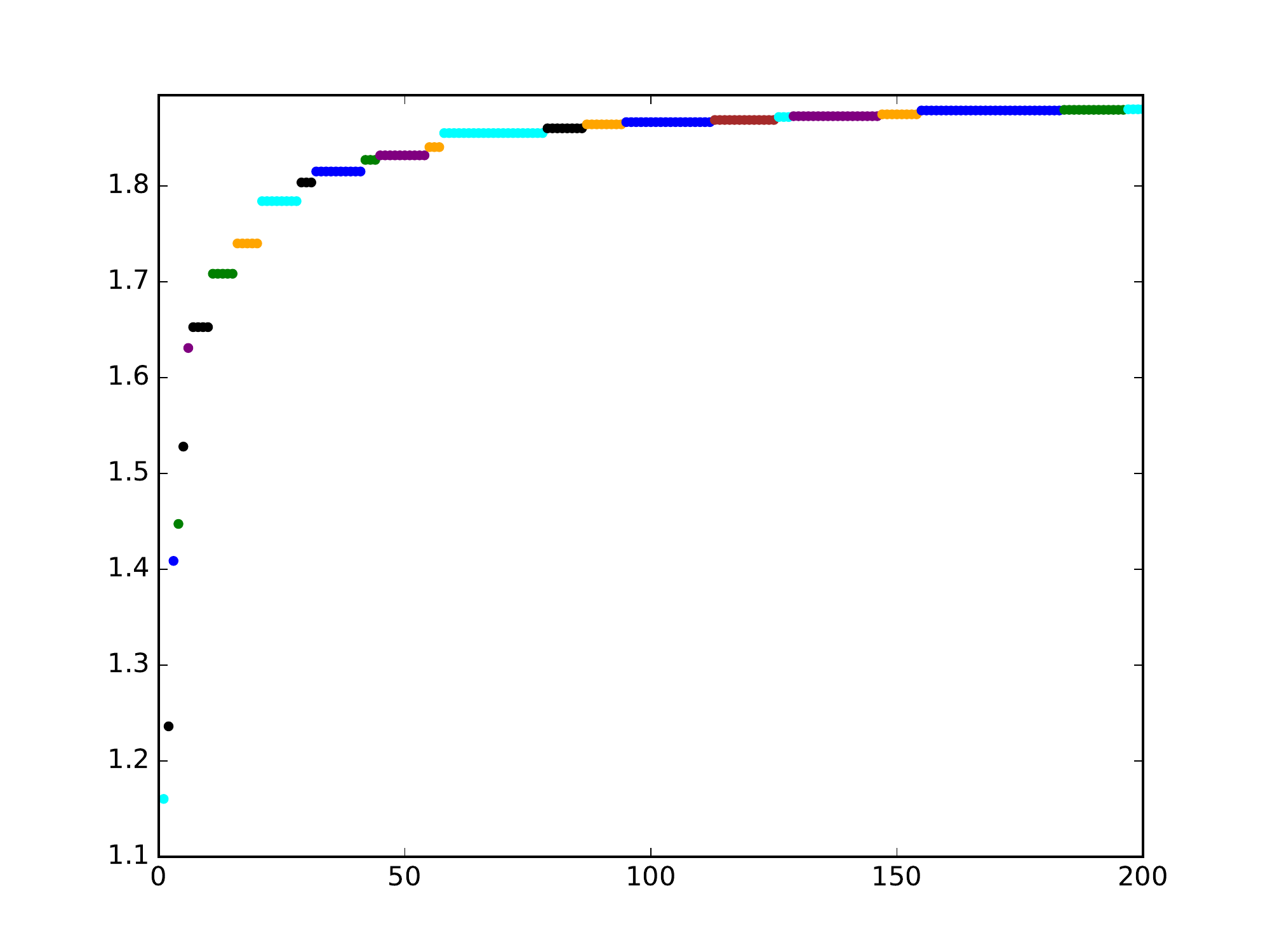}
		\caption{A plot of $\min\limits_{\gt\in\cS}D_M(\gt)$ for $M\in[200]$, colored corresponding to $\argmin\limits_{\gt\in\cS}D_M(\gt)$.}
		\label{small M}
	\end{figure}
	
	The proof of this Theorem involves indirectly computing particular values of $W$ by computing the values at which each $\gt\in\cS$ is the minimizer, in terms of the convergents. It is now convenient to look at the convergents as functions $k_5,k_6:\cS\to\N$: 
	\begin{align*}
		k_5(\eta_1) &= 43 & k_6(\eta_1) &= 70 \\
		k_5(\eta_2) &= 23 & k_6(\eta_2) &= 37 \\
		k_5(\eta_3) &= 35 & k_6(\eta_3) &= 57 \\
		k_5(\eta_4) &= 18 & k_6(\eta_4) &= 29 \\
		k_5(\eta_5) &= 27 & k_6(\eta_5) &= 44 \\
		k_5(\eta_6) &= 19 & k_6(\eta_6) &= 30 \\
		k_5(\eta_7) &= 13 & k_6(\eta_7) &= 21 \\
		k_5(\eta_8) &= 19 & k_6(\eta_8) &= 31 \\
	\end{align*}
	We then define new sequences 
	\begin{align*}
		K_n(1) &= 70F_n+43F_{n-1} \\ 
		K_n(2) &= 71F_n+44F_{n-1} \\ 
		K_n(3) &= 76F_n+47F_{n-1} \\ 
		K_n(4) &= 79F_n+49F_{n-1} \\ 
		K_n(5) &= 81F_n+50F_{n-1} \\ 
		K_n(6) &= 89F_n+55F_{n-1} \\ 
		K_n(7) &= 92F_n+57F_{n-1} \\ 
		K_n(8) &= 97F_n+60F_{n-1}		
	\end{align*}
	with the further convention that for any $n\in\N$, $m\in\Z$, and $i\in[8]$,
	$$K_n(i+8m)=K_{n-m}(i).$$
	So, for instance, $K_n(0)=K_{n-1}(8)$. 
	
	Note that this is merely a reindexing of each $k_n(\cdot)$ by the permutation $\pi=(2 \ 8 \ 5)(3 \ 7 \ 6 \ 4)\in S_8$ (that is, $K_n(j)$ is a shift of the convergents $k_{n}(\eta_i)$ for $j=\pi i$). Call $\copi=\pi\inv$. 
	\begin{lemma}\label{cycle}
		For all positive integers $n$, 
		$$K_n(1)<K_n(2)<K_n(3)<K_n(4)<K_n(5)<K_n(6)<K_n(7)<K_n(8)<K_{n+1}(1).$$
	\end{lemma}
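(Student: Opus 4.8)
The plan is to exploit the common shape of these sequences: each one is an $\N$-linear combination $K_n(i) = p_iF_n + q_iF_{n-1}$ of the two ``basis'' Fibonacci numbers $F_n$ and $F_{n-1}$, with the coefficient lists
$$(p_1,\dots,p_8) = (70,71,76,79,81,89,92,97), \qquad (q_1,\dots,q_8) = (43,44,47,49,50,55,57,60)$$
both \emph{strictly} increasing. Since for every positive integer $n$ one has $F_n\ge 1$ and $F_{n-1}\ge 0$, each of the seven ``internal'' inequalities $K_n(i)<K_n(i+1)$ follows immediately: the difference equals $(p_{i+1}-p_i)F_n + (q_{i+1}-q_i)F_{n-1}\ge (p_{i+1}-p_i)F_n\ge 1>0$.

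For the ``wraparound'' inequality $K_n(8)<K_{n+1}(1)$, the only extra ingredient is the Fibonacci recurrence $F_{n+1}=F_n+F_{n-1}$, which I would use to re-express $K_{n+1}(1)$ in the same $(F_n,F_{n-1})$ basis as the others, namely $K_{n+1}(1) = 70F_{n+1}+43F_n = 113F_n+70F_{n-1}$. Comparing with $K_n(8) = 97F_n+60F_{n-1}$ then gives $K_{n+1}(1)-K_n(8) = 16F_n+10F_{n-1}\ge 16>0$, which closes the chain.

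There is essentially no obstacle here---the lemma is pure bookkeeping, and the coefficient tables were presumably chosen precisely to make this work---but the one step worth a moment's care is the change of basis $F_{n+1}\mapsto F_n+F_{n-1}$ for the wraparound comparison, after which the whole statement reduces to the trivial observations that the two coefficient sequences increase and that, for the last link specifically, $70+43=113>97$ and $70>60$. (One could alternatively package the entire chain into a one-line induction on $n$, but the direct coefficient comparison above is cleaner and sidesteps a base case.)
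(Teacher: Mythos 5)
Your proof is correct and takes essentially the same approach as the paper: both reduce the chain to coefficient comparison in the $(F_n, F_{n-1})$ basis, checking that the $F_n$-coefficients increase for the internal links and re-expressing $K_{n+1}(1)$ via the Fibonacci recurrence for the wraparound. If anything your writeup is slightly more careful on the wraparound step, where the paper's terse ``since $a_i>b_j$'' only explicitly addresses the $F_{n-1}$-coefficient, while you verify both coefficients of the difference are positive.
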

	\begin{proof}
		Equivalently, $K_n(i)<K_n(j)<K_{n+1}(i)$ for all $1\le i<j\le 8$. The first inequality is obvious: if $K_n(i)=a_iF_n+b_iF_{n-1}$, then by inspection, $a_i<a_j$ whenever $i<j$. The second inequality comes from observing that $K_{n+1}(i)=a_iF_{n+1}+b_iF_n=(a_i+b_i)F_n+a_iF_{n-1}$ and since $a_i>b_j$ for all $i,j$. 
	\end{proof}
	\begin{lemma}\label{sigma and tau formulas}
		Define the sequences $\gs_n(i)$ and $\tau_n(i)$, where $\gs_n(i)<\tau_n(i)<\gs_{n+1}(i)-1$, as follows: 
		$$\{M\in\N : \eta_i=w(M)\}=\bigsqcup\limits_{n\in\N}[\gs_n(i),\tau_n(i)].$$
		Then, we have that $j=\pi i$ and
		\begin{align*}
			\gs_n(i)&=\ceil{(K_{n+3}(j-2)-3)\lpr{\frac{K_{n-1}(j-1)+K_n(j-1)\vf}{K_{n-1}(j-2)+K_n(j-2)\vf}}}-1\\
			\tau_n(i)&=\ceil{(K_{n+3}(j-1)-3)\lpr{\frac{K_{n-1}(j)+K_n(j)\vf}{K_{n-1}(j-1)+K_n(j-1)\vf}}}-2.
		\end{align*}
	\end{lemma}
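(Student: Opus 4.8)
The plan is to write $D_M(\eta_i)$ down completely explicitly as a function of $M$ for each of the eight representatives, and then---uniformly across all eight, using \lemref{cycle}---to locate the exact moments at which the running minimum $w(M)$ changes hands. First I would unwind \corref{cor:useful} for $\gt=\eta_i$. Writing $\eta_i=[0,a,b,c,d,e,\dot1]$, we have $\gt_{n+2}=\vf$ for every $n\ge4$, and $\ga<a_{n+2}=1$ forces $\ga=0$; hence the ranges $[k_n+k_{n+1}-1,\,k_n+2k_{n+1}-2]=[k_{n+2}-1,\,k_{n+3}-2]$ tile $\{m:m\ge k_6-1\}$, and on the $n$th of them $d_{\eta_i}(m)=\delta_n:=\frac{\vf}{k_n+k_{n+1}\vf}$ is constant. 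Thus $(m+1)d_{\eta_i}(m)$ is a sawtooth that on block $n$ climbs linearly from $k_{n+2}\delta_n$ to the peak value $P_n(\eta_i):=(k_{n+3}-1)\delta_n$, so $D_M(\eta_i)=\max_{m\le M}(m+1)d_{\eta_i}(m)$ becomes, for $M$ large, an alternation of plateaus at heights $P_n(\eta_i)$ with linear ramps of slope $\delta_n(\eta_i)$: it sits at $P_{n-1}(\eta_i)$ until $(M+1)\delta_n$ overtakes it---which, since one checks $\delta_{n-1}/\delta_n=\vf$ exactly, happens at $M=\ceil{\vf(k_{n+2}-1)}-1$---and then ramps up to $P_n(\eta_i)$ at $M=k_{n+3}(\eta_i)-2$. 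Along the way I would record that $P_n(\eta_i)$ is strictly increasing in $n$ with $P_n(\eta_i)\to\rho$, and that the finitely many ``pre-tail'' values $(m+1)d_{\eta_i}(m)$ with $m<k_6-1$ are eventually dominated, so the description above is exact once $M$ passes a fixed threshold; both facts are routine once one writes $k_n(\eta_i)=A_i\vf^n+B_i(-1/\vf)^n$.

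Next I would pass to the reindexed convergents. The defining identities exhibit each $K_n(j)$ as, up to an index shift depending only on $j$, the convergent sequence of $\eta_{\copi j}$, so that $P_\bullet(\eta_{\copi j})$ and $\delta_\bullet(\eta_{\copi j})$ are governed by the numbers $K_\bullet(j)$; \lemref{cycle} then says precisely that the plateau/ramp ``events'' of the eight functions $D_\bullet(\eta_1),\dots,D_\bullet(\eta_8)$ interleave in one cyclic order $\eta_{\copi1},\eta_{\copi2},\dots,\eta_{\copi8},\eta_{\copi1},\dots$, one full cycle per unit increment of the $K$-subscript. From here the argument is structural: of the eight increasing functions $D_\bullet(\eta_k)$, all tending to $\rho$, the minimizer is whichever currently sits on the lowest plateau; the instant the reigning function begins a ramp it is overtaken, and \lemref{cycle} forces the new minimizer to be the next element of the cyclic order. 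Consequently $\{M:\eta_i=w(M)\}=\bigsqcup_{n\in\N}[\sigma_n(i),\tau_n(i)]$, one interval per cycle, with $\sigma_n(\copi(j+1))=\tau_n(\copi j)+1$ and $\sigma_{n+1}(\copi1)=\tau_n(\copi8)+1$; the separation $\sigma_n(i)<\tau_n(i)<\sigma_{n+1}(i)-1$ then holds automatically, since seven further intervals always lie strictly between $\tau_n(i)$ and $\sigma_{n+1}(i)$.

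It remains to evaluate the switch points. A transition happens exactly when the ramp $D_M=(M+1)\delta$ of the outgoing function reaches the plateau height $P$ of the appropriate competitor (which \lemref{cycle} identifies), i.e.\ at $M=\ceil{P/\delta}-1$; and by the formulas of the first step $P/\delta$ equals $(k_{p+3}-1)$ times a ratio of the two competitors' denominators $k_p+k_{p+1}\vf$ and $k_q+k_{q+1}\vf$---precisely the shape $(K_{n+3}(j-2)-3)\cdot\frac{K_{n-1}(j-1)+K_n(j-1)\vf}{K_{n-1}(j-2)+K_n(j-2)\vf}$ once one substitutes the $K$-formulas and tracks the $\pm1,\pm2,\pm3$ thrown off by the integer endpoints of the Slater--S\'os ranges and by the ceiling. \lemref{cycle} dictates which plateau of which competitor and which ramp of which competitor are in play here, hence all the index shifts; the computation of $\tau_n(i)$ is the mirror image.

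The step I expect to be the main obstacle is precisely this last bookkeeping: verifying that the cyclic picture is ``clean'' for every $n\ge1$---no competitor drops below the reigning one out of turn, no ramp overshoots two plateaus at once, and every crossover lands strictly inside a ramp rather than exactly at a peak---and then pinning down the additive constants in $\sigma_n(i)$ and $\tau_n(i)$ exactly. For large $n$, cleanliness reduces to a finite list of inequalities among the $A_i,B_i$, which in turn follow from the explicit values of $k_5(\eta_i),k_6(\eta_i)$ recorded above; the finitely many small $n$, where those asymptotic estimates are too weak, have to be confirmed by a direct computation, much as in the proof of \thmref{under suff}.
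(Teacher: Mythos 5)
Your outline matches the paper's proof in its essential architecture: both rely on \corref{cor:useful} to turn each $D_M(\eta_i)$ into a running-max of a sawtooth (your ``plateaus and ramps''), both use \lemref{cycle} to establish that the eight sawtooths interleave in one fixed cyclic order per unit shift of the $K$-index, and both extract $\gs_n$ and $\tau_n$ as ceiling expressions coming from a ``ramp meets plateau'' threshold. The one genuine organizational difference is how the bookkeeping is carried out. The paper does not track the minimizer intervals directly; it introduces ``dual'' sequences $\hgs_n(i),\htau_n(i)$ recording the intervals on which $\eta_{\copi i}$ is the \emph{maximizer} of $D_M$, computes $\hgs_n(i)$ as the $m$ at which the running value of $\eta_{\copi i}$ first overtakes the just-achieved peak of $\eta_{\copi(i-1)}$, sets $\htau_n(i)=\hgs_n(i+1)-1$ by the tiling property, and only at the end translates back via $\gs_n(i)=\hgs_n(\pi i-1)$, $\tau_n(i)=\hgs_n(\pi i)-1$. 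This detour is not purely cosmetic: it is what forces the indices in the displayed formula to land on $j-1$ and $j-2$ rather than on the $j$ and $j-1$ you might expect from comparing the incoming minimizer against the outgoing one directly. Your paragraph three describes the crossing as ``the ramp of the outgoing function reaching the plateau of the appropriate competitor'' and then writes down the paper's formula with the $j-1,j-2$ indices, but as written you have not justified why the appropriate competitor is $\eta_{\copi(j-2)}$ and not $\eta_{\copi j}$ itself; if you chase the minimizer transitions naively you compute a different real threshold, and you would owe an argument that the two ceilings coincide integer by integer. The maximizer framing dispatches this cleanly, and I would recommend adopting it. Your last paragraph correctly flags the remaining work (the cyclic picture holding cleanly for all $n$, and the small-$M$ regime); the paper is candid that it asserts the succession as ``evident'' once $M\ge 70$ and supports it with \figref{runs}, so that part of your plan is in line with the paper as well.
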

	\begin{proof}
		We first establish that these sequences are well-defined for all $i$. 
		$w(M)$ is $\eta_i$ for which $D_M(\eta_i)<D_M(\eta_j)$ for all $j\neq i$. However, for all choices of $i\neq j$ and $n$, with $M_n(i)=K_n(\pi i)+2K_{n+1}(\pi i)-2=K_{n+3}(\pi i)-2$, we have 
		$$D_{M_n(i)}(\eta_i)\ge(M_n(i)+1)d_{\eta_i}(M_n(i))>D_{M_n(i)}(\eta_j).$$
		The first inequality is trivial. 
		The second follows by considering $m+1$ and $d_{\eta_j}(m)$ separately: clearly on $m\in[M_n(i)]$, $m+1\le M_n(i)+1$. Then, say for fixed $j$ that $K_{n+2}(\pi j)-1\le M_n(i)\le K_{n+3}(\pi j)-2$. By \lemref{cycle}, $M_{n-1}(i)<K_{n+2}(\pi j)<M_n(i)$, from which we conclude that $K_n(\pi i)<K_n(\pi j)$. By \corref{cor:useful}, we have that 
		\begin{align*}
			d_{\eta_i}(M_n(i))&=\frac{1}{K_{n-1}(\pi i)+K_n(\pi i)\vf} \\
			d_{\eta_j}(M_n(i))&=\frac{1}{K_{n-1}(\pi j)+K_n(\pi j)\vf}
		\end{align*}
		Therefore $d_{\eta_i}(M_n(i))>d_{\eta_j}(M_n(i))$, concluding the second inequality. Thus, there are infinitely many values $M$ (e.g. those of the form $M_n(i)$) at which $\eta_i\neq w(M)$. Hence $\gs_n(i)$ and $\tau_n(i)$ are well-defined sequences for all $i$. 
		
		Further, it is evident from the above argument that the ``order of succession'' for $M$ sufficiently large, e.g. $M\ge K_1(1)=70$, is $\eta_{\copi i}$ for $i=1,2,\dots,8$ and repeating---that is, $w(M)=\eta_1$ for $M$ on some interval $[s_1,s_2-1]$, followed by $w(M)=\eta_2$ on $[s_2,s_3-1]$, etc., up to $w(M)=\eta_8$ on $[s_8,s_1'-1]$, and then this cycle repeats with $w(M)=\eta_1$ on $[s_1',s_2'-1]$. Therefore we just need to compare $\eta_{\copi i}$ against $\eta_{\copi(i-1)}$ and $\eta_{\copi(i+1)}$. See \figref{runs} for an illustration of the interval-based behvaior. 
		
		\begin{figure}
			\centering
			\includegraphics[width=0.75\textwidth]{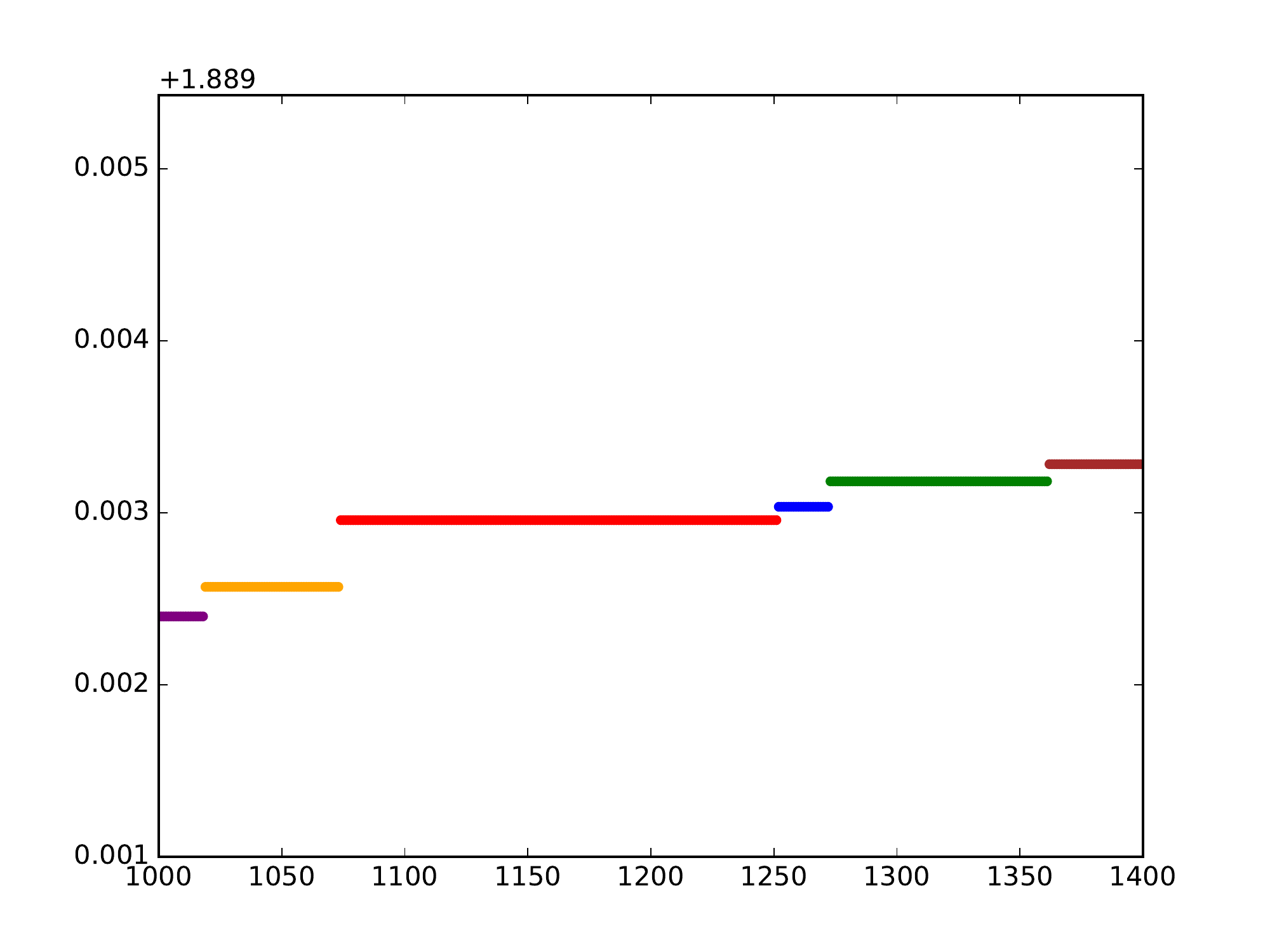}
			\caption{A plot of $\min\limits_{\gt\in\cS}D_M(\gt)$ for $M\in[1200,1400]$, colored corresponding to $\argmin\limits_{\gt\in\cS}D_M(\gt)$.}
			\label{runs}
		\end{figure}
		
		It is now convenient to define ``dual'' sequences $\hgs_n(i)$ and $\htau_n(i)$ defined as $[\hgs_n(i),\htau_n(i)]\ni M$ the $n$th range on $\N\cap[70,\infty)$ for which $\eta_{\copi i}$ {\em maximizes} $D_M(\gt)$ over $\gt\in\cS$. We see that for similar reasons, this maximizer cycles through $1,2,\dots,8$. We compute $\hgs_n(i)$ by considering $\eta_{\copi(i-1)},\eta_{\copi i}$: at what value $m>M_n(\copi(i-1))$ does it first occur that 
		$$(m+1)d_{\eta_{\copi i}}(m)\ge(M_n(\copi(i-1))+1)d_{\eta_{\copi(i-1)}}(M_n(\copi(i-1)))?$$
		Algebraic manipulation gives $m\ge(M_n(\copi(i-1))-1)\lpr{\frac{K_{n-1}(\copi i)+K_n(\copi i)\vf}{K_{n-1}(\copi(i-1))+K_n(\copi(i-1))\vf}}$, hence 
		$$\hgs_n(i)=\ceil{(M_n(\copi(i-1))-1)\lpr{\frac{K_{n-1}(\copi i)+K_n(\copi i)\vf}{K_{n-1}(\copi(i-1))+K_n(\copi(i-1))\vf}}-1}.$$
		Then, since $\N\cap[70,\infty)\subset\bigsqcup\limits_{n\in\N}\bigsqcup\limits_{i\in[8]}[\hgs_n(i),\htau_n(i)]$, we immediately obtain the relationship
		$$\htau_n(i)=\hgs_n(i+1)-1.$$
		Finally, we observe that 
		\begin{align*}
			\gs_n(i)&=\hgs_n(\pi i-1)\\
			\tau_n(i)&=\htau_n(\pi i-1)=\hgs_n(\pi i)-1
		\end{align*}
		because by that interval, all $j\neq i$ will have already achieved a maximum surpassing $\eta_{\copi i}$'s. 
	\end{proof}
	\begin{proof}[Proof of \thmref{help 3}]
		Given $i$ and $M$, let $j=\pi i$ and let $n$ be the greatest integer such that $K_n(j)\le M$. 
		$W_{\eta_i}(M)\in\gT\lpr{\tau_n(j)-\gs_n(j)}$ and so we have the following asymptotic tendencies: 
		\begin{align*}
			\liminf\limits_{M\to\infty}\frac{W_{\eta_i}(M)}{M}&=\lim\limits_{n\to\infty}\frac{\tau_n(j)-\gs_n(j)}{\gs_{n+1}(j)-\gs_n(j)} \\
			\limsup\limits_{M\to\infty}\frac{W_{\eta_i}(M)}{M}&=\lim\limits_{n\to\infty}\frac{\tau_n(j)-\gs_n(j)}{\tau_n(j)-\tau_{n-1}(j)}
		\end{align*}
		and using the exact values computed in \lemref{sigma and tau formulas} gives the stated values. 
	\end{proof}
	
	We can interpret this result as saying that as $M$ grows, each element of $\cS$ is represented as $w(M)$ infinitely many times. Further, $\eta_1=w(M)$ with marginally higher probability than the alternatives. 
	
	There is an interesting parallel to be drawn with Theorems \ref{no best} and \ref{help 3} and with work in analytic number theory on prime distributions. In 1914, Littlewood \cite{Lit} proved the unexpected fact that the difference $\pi(x)-\li(x)$ alternates infinitely often.\footnote{Here, $\pi(x,q,a)$ counts primes $p<x$ with $p\equiv a\pmod{q}$ with $\pi(x)$ implicitly having $(q,a)=(1,0)$ and $\li$ is the logarithmic integral $\int_0^x\frac{dt}{\log t}$.} Likewise, \thmref{no best} gives eightfold (rather than twofold) alternation. Earlier, in 1853, Chebyshev noticed that $\pi(x,4,3)>\pi(x,4,1)$ despite the asymptotic behavior $\frac{\pi(x,4,3)}{\pi(x,4,1)}\to1$, a result strengthened and generalized considerably by Rubinstein--Sarnak \cite{RS} and termed ``Chebbyshev's bias.'' Here we see a much stronger emergent bias in the statement of \thmref{help 3}, where there exists some $M_0\in\N$ where for all $M>M_0$, we have
	$$W_{\eta_1}(M)>W_{\eta_7}(M)>W_{\eta_4}(M)>W_{\eta_2}(M)>W_{\eta_6}(M)>W_{\eta_3}(M)>W_{\eta_8}(M)>W_{\eta_5}(M).$$
	
	In preliminary explorations that became this paper, an attempt was made at the related problem of 
	\begin{center}
		\bf for each $M\in[49]$, minimize $D_M(\gt)$ over all $\gt\in\lbr{0,\half}$. 
	\end{center}
	The approach was to na\"{i}vely sample from the interval a large number of times (100000) for each $M$. Except when $M$ takes the values 30 and 31---where the optimum is approximately $\frac{1}{30}$ and $\frac{1}{31}$, respectively, to within one part in $10^6$---the values agree with the problem constrained for $\gt\in\cS$ as is solved in this section of the text to within one part in at least $10^3$. 
	
	We can also compare these results with Ridley \cite{Rid}, which studies a related problem in packing efficiency of features in plants which grow at fixed divergence angles. There, the optimal angle (out of total angle 1) is determined to be $(\vf-1)^2$; note that $\eta_7=(\vf-1)^2$ (as enumerated in \figref{unders}). Therefore, we see that Ridley's notion of optimality coincides with the notion explored here using $D_M(\gt)$ when $M$ takes the values 2, 5, 7--10, 29, 45, and 47--49, where in Ridley's model, $M$ represents the number of generations, that is, the number of features (e.g. petals on a flower) that have grown using the constant divergence angle $\gt$. 
	
\section*{Acknowledgements}
	This work was completed as part of my senior thesis at Princeton University. I am grateful to my advisor Peter Sarnak for suggesting this problem and for his guidance throughout.

\section{Code}

	\subsection{Python 2.7 code for the proof of \thmref{under suff}}\label{python app}
		The following Python code was used following \lemref{abcde bounds} to prove \thmref{under suff}. 
		\begin{lstlisting}[language=Python]
phi = (1+5**0.5)/2
rho = 1+2/5**0.5
def V(n,a):
	xl = [(a*i) % 1 for i in range(n+1)]
	xl.sort()
	xl.append(1)
	maxGap = 0
	for i in range(n+1):
		if xl[i+1] - xl[i] > maxGap:
			maxGap = xl[i+1] - xl[i]
	return maxGap

A = 18
B = 18
C = 14
D = 12
E = 11

for a in range(1,A+1):
	for b in range(1,B+1):
		for c in range(1,C+1):
			for d in range(1,D+1):
				for e in range(1,E+1):
					gt = [1,a,b,c,d,e,1,1,1,1,1,1,1,1,1,1,1, \
							1,1,1,1,1,1,1,1,1,1,1,1,1,1,1,1]
					xp = 1+1/(a+1/(b+1/(c+1/(d+1/(e+1/phi)))))
					kcurr = 0
					knext = 1
					continueQ = True
					for n in range(-1,30):
						xp = 1./(xp-gt[n+1])
						if n > 6:
							xp = phi
						for ga in range(0,gt[n+2]):
							m = kcurr+(ga+2)*knext-2
							if (m+1)*(xp-ga)/(knext*xp+kcurr) > rho:
								continueQ = False
								break
						if not continueQ:
							break
						newknext = gt[n+2]*knext+kcurr
						kcurr = knext
						knext = newknext
					if continueQ:
						print a,b,c,d,e
\end{lstlisting}
		
	\subsection{Mathematica 12 code for generating \figref{eye test}}\label{mathematica app}
		The following Mathematica code was used to generate \figref{eye test}. 
		
		{\bf Warning:} due to internal precision error, the code sometimes crashes. The source of this error is in \texttt{pos = Sort[N[DeleteDuplicates[Differences[L] // FullSimplify]]];} where \texttt{DeleteDuplicates} might leave a list of length longer than 3, in turn causing \texttt{nearest3} to throw an error. This can be resolved manually for given \texttt{a} and \texttt{n}. 
\begin{lstlisting}[language=Mathematica]
nearest2[{a_, b_}][x_] := If[x < (a + b)/2, 1, 2];
nearest3[{a_, b_, c_}][x_] := If[x < (a + b)/2, 1, 
								If[(a + b)/2 <= x < (b + c)/2, 2, 3]];
tricolor[a_, n_] := Module[{L = Vlist[a, n], pos},
	pos = Sort[N[DeleteDuplicates[Differences[L] // FullSimplify]]];
	Table[{
		If[Length[pos] == 2, {Black, Orange}[[
			nearest2[pos][L[[i + 1]] - L[[i]]]]], {Green, Blue, Red}[[
			nearest3[pos][L[[i + 1]] - L[[i]]]]]],
		Thick, Circle[{0, 0}, 1, {2Pi L[[i]], 2Pi L[[i + 1]]}],
		Black, Point[{Cos[2Pi L[[i]]], Sin[2Pi L[[i]]]}]
		}, {i, 1, n + 1}]];

Manipulate[Graphics[tricolor[a, n]], {n, 1, 100, 1},
 {a, {(13 + Sqrt[5])/82 -> "\!\(\*SubscriptBox[\(a\), \(1\)]\)", 
		( 7 - Sqrt[5])/22 -> "\!\(\*SubscriptBox[\(a\), \(2\)]\)",
		(11 + Sqrt[5])/58 -> "\!\(\*SubscriptBox[\(a\), \(3\)]\)",
		( 5 - Sqrt[5])/10 -> "\!\(\*SubscriptBox[\(a\), \(4\)]\)",
		( 9 + Sqrt[5])/38 -> "\!\(\*SubscriptBox[\(a\), \(5\)]\)",
		(25 - Sqrt[5])/62 -> "\!\(\*SubscriptBox[\(a\), \(6\)]\)",
		( 3 - Sqrt[5])/ 2 -> "\!\(\*SubscriptBox[\(a\), \(7\)]\)",
		( 7 + Sqrt[5])/22 -> "\!\(\*SubscriptBox[\(a\), \(8\)]\)"}}]
\end{lstlisting}

	\subsection{Python 2.7 code for generating \figref{small M}}\label{code:small M}
		The following Python code was used to generate \figref{small M}. It is admittedly not the most efficient way to handle this data, but given the relatively small numbers used, ease of coding took priority over asymptotic efficiency. 
		
		\texttt{V} and \texttt{rho} are as in \secref{python app}. 
		
		In order to produce an output on a different range $[a,b]$ of $x$-axis values (such as in \figref{runs}), replace the outer loop with \texttt{for m in range(1,b+1)} and the last line with \texttt{plt.xlim(a,b)}. 
		\begin{lstlisting}[language=Python]
import matplotlib.pyplot as plt
from math import sqrt
etas = [ (13+sqrt(5))/82, (7-sqrt(5))/22, (11+sqrt(5))/58, (5-sqrt(5))/10, (9+sqrt(5))/38, (25-sqrt(5))/62, (3-sqrt(5))/2, (7+sqrt(5))/22 ]
colors = ["red","orange","purple","green","blue","brown","black","aqua"] 
Vs = [[],]*8

for m in range(1,201):
	min = rho
	minAt = 8
	for i in range(8):
		Vs[i] = Vs[i] + [(m+1)*V(m,etas[i])]
		if max(Vs[i]) < min:
			min = max(Vs[i])
			minAt = i
	plt.scatter([m],[min],c=colors[minAt],linewidths=0)
plt.ylim(top=rho)
plt.xlim(0,200)
\end{lstlisting}

\end{document}